\numberwithin{equation}{section}
\newtheorem{thm}{Theorem}[section]
\newtheorem{cor}[thm]{Corollary}
\newtheorem{lem}[thm]{Lemma}
\newtheorem{pro}[thm]{Proposition}
\newtheorem*{thm*}{Theorem}
\newtheorem*{opq*}{Problem}
\theoremstyle{remark}
\newtheorem{rem}[thm]{Remark}
\theoremstyle{definition}
\newtheorem{exa}[thm]{Example}
\newtheorem{dfn}[thm]{Definition}
\DeclareMathOperator{\dzii}{{\mathsf{Chi}}}
\newcommand*{\card}[1]{\mathrm{card}(#1)}
\newcommand*{\cbb}{\mathbb C}
\newcommand*{\dzi}[1]{\dzii(#1)}
\newcommand*{\Ge}{\geqslant}
\newcommand*{\hh}{\mathcal H}
\newcommand*{\lambdab}{\boldsymbol \lambda}
\newcommand*{\lambdabh}{\hat{\lambdab}}
\newcommand*{\Le}{\leqslant}
\newcommand*{\ogr}[1]{{\boldsymbol B (#1)}}
\newcommand*{\rbb}{\mathbb R}
\newcommand*{\slam}{{S_{\lambdab}}}
\newcommand*{\slamh}{S_{\hat{\lambdab}}}
\newcommand*{\supp}[1]{\mathrm{supp}\,#1}
\newcommand*{\tcal}{{\mathscr T}}
\newcommand*{\tcalh}{\hat{\mathscr T}}
\newcommand*{\zbb}{\mathbb{Z}}
\newcommand*{\nbbc}{\bar{\mathbb N}_{2}}
\newcommand*{\nbb}{\mathbb{N}}
\begin{document}

   \title[A Subnormal Completion Problem, II]{A
Subnormal Completion Problem for \\ Weighted Shifts on
Directed Trees, II}

   \author[G. R. Exner]{George R. Exner}
   \address{Department of Mathematics, Bucknell
University, Lewisburg, Pennsylvania 17837, USA}
\email{exner@bucknell.edu}
   \author[I.\ B.\ Jung]{Il Bong Jung}
   \address{Department of Mathematics,
Kyungpook National University, Da\-egu 41566, Korea}
\email{ibjung@knu.ac.kr}
   \author[J.\ Stochel]{Jan Stochel}
\address{Instytut Matematyki, Uniwersytet
Jagiello\'nski, ul.\ \L ojasiewicza 6, PL-30348 Kra\-k\'ow,
Poland} \email{Jan.Stochel@im.uj.edu.pl}
   \author[H. Y. Yun]{Hye Yeong Yun}
   \address{Department of Mathematics,
Kyungpook National University, Da\-egu 41566, Korea}
\email{yunhy@knu.ac.kr}

\subjclass{Primary 47B20, 47B37; Secondary 05C20}

\keywords{Subnormal operator, weighted shift on a directed
tree, subnormal completion problem, $2$-atomic measures}

   \thanks{The research of the second author was supported by the National Research
Foundation of Korea (NRF) grant funded by the Korea
government (MSIT) (2018R1A2B6003660).}

\date{}
\maketitle

   \begin{abstract}
The subnormal completion problem on a directed tree is
to determine, given a collection of weights on a
subtree, whether the weights may be completed to the
weights of a subnormal weighted shift on the directed
tree. We study this problem on a directed tree with a
single branching point, $\eta$ branches and the trunk
of length $1$ and its subtree which is the
``truncation'' of the full tree to vertices of
generation not exceeding $2$. We provide necessary and
sufficient conditions written in terms of two
parameter sequences for the existence of a subnormal
completion in which the resulting measures are
$2$-atomic. As a consequence, we obtain a solution of
the subnormal completion problem for this pair of
directed trees when $\eta < \infty$. If $\eta=2$, we
present a solution written explicitly in terms of
initial data.
   \end{abstract}
   \section{Introduction}
The class of unilateral weighted shifts on Hilbert
space has been a standard and important source of
examples with which to study the properties of bounded
linear operators on Hilbert space, including
especially the investigation of subnormality (see
\cite{Shi} and \cite{Con1991}). A recently introduced
class of weighted shifts on directed trees provides a
more extensive collection of objects for study (see
e.g., \cite{JJS,JJS2012,BJJS2012,BJJS2013,BJJS2018}).
In \cite{EJSY1}, we initiated the study of a subnormal
completion problem for weighted shift operators on
directed trees. For a classical weighted shift, the
subnormal completion problem is to be given an initial
finite sequence of positive weights and to determine
whether or not they may be extended to the weights of
an injective, bounded, subnormal unilateral weighted
shift; such a shift is called a subnormal completion
of the initial weight sequence (see
\cite{Sta,JJKS2011}; see also
\cite{CuF91,CuF93,CuF94,CuF1996}). We consider the
analogous task in the setting of weighted shifts on
directed trees.

In the present paper we continue the study of the
subnormal completion problem for weighted shifts on
directed trees. As in \cite{EJSY1}, we restrict our
attention to the directed tree $\tcal_{\eta,\kappa}$
with a single branching point, $\eta$ branches and the
trunk of length $\kappa$, and we consider the
subnormal completion problem with respect to the
subtree $\tcal_{\eta,\kappa,p}$, which is the
``truncation'' of $\tcal_{\eta,\kappa}$ to vertices of
generation not exceeding $p$. If $\eta$ is finite and
the $p$-generation subnormal completion problem on
$\tcal_{\eta,\kappa}$ has a solution for given initial
data $\lambdab$, then we can always find a subnormal
completion $\slamh$ such that the measures
$\mu_{i,1}^{\lambdabh}$, which are canonically
associated with $\slamh$ at vertices of the first
generation, are at most
$\lfloor\frac{\kappa+p+2}{2}\rfloor$-atomic (see
Theorem~\ref{finite-su}). So far as we know, there is
no solution of the $p$-generation subnormal completion
problem on $\tcal_{\eta,\kappa}$ written in terms of
initial data for $p\Ge 2$. The only explicit solution
is that for the $1$-generation subnormal completion
problem on $\tcal_{\eta,1}$ (see
\cite[Theorem~5.1]{EJSY1}). In view of the above
discussion, if the $2$-generation subnormal completion
problem on $\tcal_{\eta,1}$ has a solution for given
initial data $\lambdab$, then we can always find a
subnormal completion $\slamh$ of $\lambdab$ on
$\tcal_{\eta,1}$ with the property that each measure
$\mu_{i,1}^{\lambdabh}$ is $1$- or $2$-atomic. In this
paper we provide necessary and sufficient conditions
written in terms of two parameter sequences for
$\lambdab$ to admit a subnormal completion $\slamh$ on
$\tcal_{\eta,1}$ with $2$-atomic measures
$\mu_{i,1}^{\lambdabh}$. As a consequence, we solve
the $2$-generation subnormal completion problem on
$\tcal_{\eta,1}$ for $\eta <\infty$. The results,
taken as a whole, suggest that a complete answer to
the subnormal completion problem even for this class
of directed trees is at present out of reach.

The paper is organized as follows. In Section~ \ref{Sect2},
we provide notation, terminology and results that are
needed in this paper. In Section~ \ref{Sect3} we carry out
an in-depth analysis of the first two ``negative'' moments
of the Berger measure associated with the Stampfli
completion of three increasing weights to the weight
sequence for a subnormal unilateral weighted shift (see
Lemma~ \ref{prop:formsoftheints}). In Section~\ref{Sect4}
we state and prove the solutions of the $2$-generation
subnormal completion problem on $\tcal_{\eta,1}$ with
$2$-atomic measures (the case $\eta=\infty$ is included,
see Theorem~\ref{charoftwoatomic-infty}) and without any
restrictions on supports of the associated measures (only
for $\eta < \infty$; see
Theorem~\ref{charoftwoatomic-solution}). In view of
Proposition~\ref{charoftwoatomic}(iii), to solve the
problem with $2$-atomic measures we have to compute the
infimum of a quadratic form in $\eta$ variables subject to
some constraints. This task is extremely complicated. In
Section~\ref{Sect5} we give a solution of the
$2$-generation subnormal completion problem on
$\tcal_{2,1}$ with $2$-atomic measures written entirely in
terms of the initial data (see Theorem~\ref{iff-eta2}).
This confirms the scale of the complexity of this problem.
   \section{\label{Sect2}Preliminaries}
In this section we sketch briefly the notation and results
necessary for the present discussion, but the reader is
encouraged to consult \cite{EJSY1} for a considerably more
complete presentation.

Given a complex Hilbert space $\hh$, we denote by
$\ogr{\hh}$ the $C^*$-algebra of all bounded linear
operators on $\hh$. Recall that an operator $T\in
\ogr{\hh}$ is said to be {\em subnormal} if there
exists a complex Hilbert space $\mathcal{K}$
containing $\hh$ and a normal operator $N\in
\ogr{\mathcal{K}}$ such that $Th = Nh$ for all $h\in
\hh$. We refer the reader to
\cite{Con1981,Con1991,Hal1950} for the foundations of
the theory of subnormal operators.

Denote by $\zbb_+$, $\nbb$, $\rbb$, $\rbb_+$ and $\cbb$ the
sets of nonnegative integers, positive integers, real
numbers, nonnegative real numbers and complex numbers,
respectively. Set $\nbb_2 = \{2,3,4, \ldots\}$ and
$\nbbc=\nbb_2 \cup \{\infty\}$. Define $J_{\iota }$ by
   \begin{align*}
\text{$J_{\iota }=\{k\in \mathbb{N}\colon k\Le \iota \}$
for $\iota \in \zbb_+ \cup \{\infty\}$},
   \end{align*}
using the convention that $J_0=\varnothing$. We write
$\card{X}$ for the cardinality of a set $X$. In what
follows, $\delta_x$ stands for the Dirac Borel measure
on $\rbb_+$ at the point $x\in \rbb_+$. The closed
support of a Borel probability measure $\mu$ on
$\rbb_+$ is denoted by $\supp{\mu}$.

A pair $\mathscr{G}=(V,E)$ is a \textit{directed
graph} if $V$ is a nonempty set and $E$ is a subset of
$V\times V$. A member of $V$ is a \textit{vertex\ of
}$\mathscr{G}$, and an element of $E$ is called an
\textit{edge} of $\mathscr{G}$. For $u\in V$, we set
$\dzi{u}=\{v\in V:(u,v)\in E\}$. A member of $\dzi{u}$
is called a \textit{child} of $u$. A vertex $v$ of
$\mathscr{G}$ is called a \textit{root} of
$\mathscr{G}$, which we may also write as $v\in
\mathrm{Root}(\mathscr{G}) $, if there is no vertex
$u$ of $\mathscr{G}$ such that $(u,v)$ is an edge of
$\mathscr{G}$. We set $V^{\circ }=V\setminus
\mathrm{Root}(\mathscr{G})$. We say that $\tcal=(V,E)$
is a {\em directed tree} if $\tcal$ is a directed
graph, which is connected, has no circuits and has the
property that for each vertex $u \in V$ there exists
at most one vertex $v\in V$, called the {\em parent}
of $u$ and denoted here by $\mathrm{par}(u)$, such
that $(v,u)\in E$. The reader is referred to
\cite{JJS,Ore} for more information on directed trees
needed in this paper.
   \begin{figure}[ht]
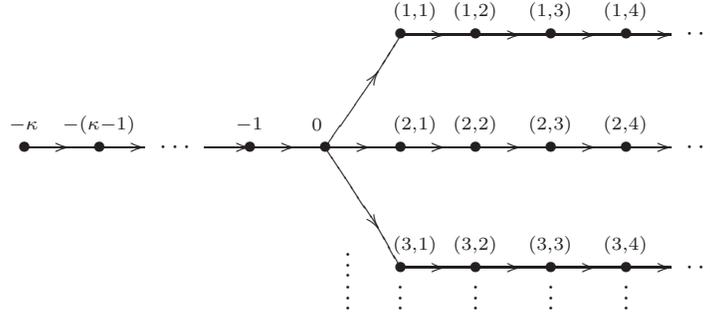

\centerline{\xy (0,8);(6,8)**@{-}?>*\dir{>},
(6,8);(10,8)**@{-}, (10,8);(16,8)**@{-}?>*\dir{>},
(24,8);(30,8)**@{-}?>*\dir{>}, (30,8);(36,8)**@{-}
?>*\dir{>}, (36,8);(40,8)**@{-},
(40,8);(47,18)**@{-}?>*\dir{>},
(47,18);(50,22.8)**@{-},
(40,8);(46,8)**@{-}?>*\dir{>}, (46,8);(50,8)**@{-},
(50,8);(56,8)**@{-}?>*\dir{>}, (56,8);(60,8)**@{-},
(60,8);(66,8)**@{-}?>*\dir{>}, (66,8);(70,8)**@{-},
(70,8);(76,8)**@{-}?>*\dir{>}, (76,8);(80,8)**@{-},
(80,8);(86,8)**@{-}?>*\dir{>},(86,8);(90,8);
(50,23);(56,23)**@{-}?>*\dir{>},
(56,23);(60,23)**@{-},
(60,23);(66,23)**@{-}?>*\dir{>},
(66,23);(70,23)**@{-},
(70,23);(76,23)**@{-}?>*\dir{>},
(76,23);(80,23)**@{-},
(80,23);(86,23)**@{-}?>*\dir{>},(86,23);
(90,23);(40,8);(47,-2.66)**@{-}?>*\dir{>},
(47,-2.66);(50,-8)**@{-},
(50,-8);(56,-8)**@{-}?>*\dir{>},
(56,-8);(60,-8)**@{-},
(60,-8);(66,-8)**@{-}?>*\dir{>},
(66,-8);(70,-8)**@{-},
(70,-8);(76,-8)**@{-}?>*\dir{>},
(76,-8);(80,-8)**@{-},
(80,-8);(86,-8)**@{-}?>*\dir{>}, (86,-8);
(90,-8);(0,8)*{\bullet}, (10,8)*{\bullet},
(0,11)*{_{-\kappa}}, (10,11)*{_{-(\kappa-1)}},
(30,11)*{_{-1}}, (39,11)*{_{0}}, (52,26)*{_{(1,1)}},
(52,11)*{_{(2,1)}}, (52,-5)*{_{(3,1)}},
(60,-5)*{_{(3,2)}}, (70,-5)*{_{(3,3)}},
(80,-5)*{_{(3,4)}}, (60,11)*{_{(2,2)}},
(70,11)*{_{(2,3)}}, (80,11)*{_{(2,4)}},
(60,26)*{_{(1,2)}}, (70,26)*{_{(1,3)}},
(80,26)*{_{(1,4)}}, (50,23)*{\bullet},
(60,23)*{\bullet}, (70,23)*{\bullet},
(80,23)*{\bullet}, (30,8)*{\bullet}, (40,8)*{\bullet},
(50,8)*{\bullet}, (60,8)*{\bullet}, (70,8)*{\bullet},
(80,8)*{\bullet}, (90,-8)*{\cdots}, (90,8)*{\cdots},
(20,8)*{\cdots}, (90,23)*{\cdots}, (43,-6.6)*{\vdots},
(43,-11)*{\vdots}, (50,-11)*{\vdots},
(60,-11)*{\vdots}, (70,-11)*{\vdots},
(80,-11)*{\vdots}, (50,-8)*{\bullet},
(60,-8)*{\bullet}, (70,-8)*{\bullet},
(80,-8)*{\bullet},
\endxy}
\vspace*{5pt} \caption{An illustration of the directed
tree $\tcal_{\eta,\kappa}$.\label{Fig0}} \centering
   \end{figure}

We shall consider a certain class of directed trees with a
single branching point obtained as follows: given $\eta \in
\nbbc$ and $\kappa \in \nbb$, we define the directed tree
$\tcal_{\eta ,\kappa }=(V_{\eta,\kappa}, E_{\eta,\kappa})$
by (see Figure \ref{Fig0}):
   \allowdisplaybreaks
   \begin{align*}
V_{\eta ,\kappa } &=\{-k:k\in J_{\kappa }\}\sqcup
\{0\}\sqcup \{(i,j):i\in J_{\eta },j\in \mathbb{N}\},
   \\
E_{\eta ,\kappa } &=E_{\kappa }\sqcup \{(0,(i,1)):i\in
J_{\eta }\}\sqcup \{((i,j),(i,j+1)):i\in J_{\eta
},j\in \mathbb{N}\},
   \\
E_{\kappa}&=\{(-k,-k+1)\colon k\in J_{\kappa }\}.
   \end{align*}
Define as well a subtree of $\tcal_{\eta, \kappa}$ on which
we may be given ``some of'' the weights of a proposed shift
as initial data: for $\eta \in \nbbc$, $\kappa \in \nbb$
and $p \in \nbb$, let the directed tree $\tcal_{\eta,
\kappa, p}=(V_{\eta, \kappa, p},E_{\eta ,\kappa ,p})$ be
defined by (see Figure \ref{Fig0.1})
   \begin{align*}
V_{\eta ,\kappa ,p} &=\{-k:k\in J_{\kappa }\}\sqcup
\{0\}\sqcup \{(i,j):i\in J_{\eta },j\in J_{p}\},
   \\
E_{\eta ,\kappa ,p} &=E_{\kappa }\sqcup
\{(0,(i,1)):i\in J_{\eta }\}\sqcup
\{((i,j),(i,j+1)):i\in J_{\eta },j\in J_{p-1}\}.
   \end{align*}
   \begin{figure}[ht]
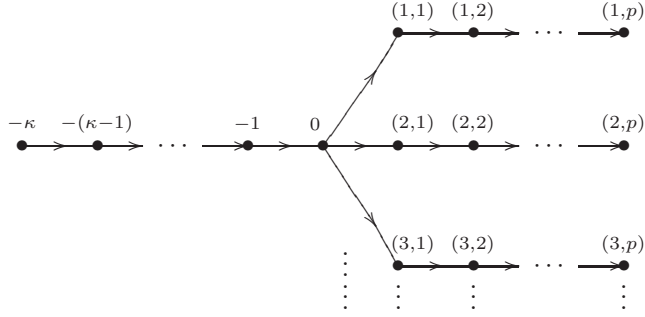

\centerline{\xy (0,8);(6,8)**@{-}?>*\dir{>},
(6,8);(10,8)**@{-}, (10,8);(16,8)**@{-}?>*\dir{>},
(24,8);(30,8)**@{-}?>*\dir{>}, (30,8);(36,8)**@{-}
?>*\dir{>}, (36,8);(40,8)**@{-},
(40,8);(47,18)**@{-}?>*\dir{>},
(47,18);(50,22.8)**@{-},
(40,8);(46,8)**@{-}?>*\dir{>}, (46,8);(50,8)**@{-},
(50,8);(56,8)**@{-}?>*\dir{>}, (56,8);(60,8)**@{-},
(60,8);(66,8)**@{-}?>*\dir{>},
(74,8);(80,8)**@{-}?>*\dir{>}, (80,8);(86,8);(90,8);
(50,23);(56,23)**@{-}?>*\dir{>},
(56,23);(60,23)**@{-},
(60,23);(66,23)**@{-}?>*\dir{>}, (70,23)*{\cdots},
(74,23);(80,23)**@{-}?>*\dir{>}, (80,23);(86,23);
(90,23);(40,8);(47,-2.66)**@{-}?>*\dir{>},
(47,-2.66);(50,-8)**@{-},
(50,-8);(56,-8)**@{-}?>*\dir{>},
(56,-8);(60,-8)**@{-},
(60,-8);(66,-8)**@{-}?>*\dir{>}, (70,-8)*{\cdots},
(74,-8);(80,-8)**@{-}?>*\dir{>}, (80,-8); (86,-8);
(90,-8);(0,8)*{\bullet}, (10,8)*{\bullet},
(0,11)*{_{-\kappa}}, (10,11)*{_{-(\kappa-1)}},
(30,11)*{_{-1}}, (39,11)*{_{0}}, (52,26)*{_{(1,1)}},
(52,11)*{_{(2,1)}}, (52,-5)*{_{(3,1)}},
(60,-5)*{_{(3,2)}}, (80,-5)*{_{(3,p)}},
(60,11)*{_{(2,2)}}, (80,11)*{_{(2,p)}},
(60,26)*{_{(1,2)}}, (80,26)*{_{(1,p)}},
(50,23)*{\bullet}, (60,23)*{\bullet},
(80,23)*{\bullet}, (30,8)*{\bullet}, (40,8)*{\bullet},
(50,8)*{\bullet}, (60,8)*{\bullet}, (80,8)*{\bullet},
(70,8)*{\cdots}, (20,8)*{\cdots}, (43,-6.6)*{\vdots},
(43,-11)*{\vdots}, (50,-11)*{\vdots},
(60,-11)*{\vdots}, (80,-11)*{\vdots},
(50,-8)*{\bullet}, (60,-8)*{\bullet},
(80,-8)*{\bullet},
\endxy}
\vspace*{5pt} \caption{An illustration of the directed
tree $\tcal_{\eta,\kappa,p}$.\label{Fig0.1}}
\centering
   \end{figure}

Given a directed tree $\tcal=(V,E)$, let $\ell^2(V)$
be the Hilbert space of all square summable complex
functions on $V$ equipped with the usual inner
product. The family $\{e_u\}_{u\in V}$ defined by
   \begin{align*}
e_u(v) =
   \begin{cases}
1 & \text{if } v = u,
   \\
0 & \text{otherwise,}
   \end{cases}
\quad v\in V,
   \end{align*}
is clearly an orthonormal basis of $\ell^2(V)$. Given
a system $\lambdab = \{\lambda_v\}_{v\in V^{\circ}}
\subseteq \cbb$, we define the operator $\slam$ in
$\ell^2(V)$ by $\slam f = \varLambda_\tcal f$ for $f
\in \ell^2(V)$ such that $\varLambda_\tcal f \in
\ell^2(V)$, where $\varLambda_\tcal$ acts on functions
$f\colon V \rightarrow \cbb$ via
   \begin{align*}
(\varLambda_\tcal f)(v) =
   \begin{cases}
\lambda_v \cdot f(\mathrm{par}(v)) & \text{if $$} v
\in V^\circ,
   \\
0 & \text{if $v$ is a root of $\tcal$.}
   \end{cases}
   \end{align*}
We call $\slam$ the \textit{weighted shift} on the
directed tree $\tcal$ with weights $\{\lambda_v\}_{v
\in V^\circ}$. Throughout this paper it is assumed
that the resulting operator $\slam$ is bounded (see
\cite{EJSY1} or more generally \cite{JJS} for an
approach suitable even for unbounded shifts, and for
discussions of when $\slam$ is indeed bounded). If
$\slam \in \ogr{\ell^2(V)}$, in view of
\cite[(3.1.4)]{JJS}, one may more easily express
$\slam$ by
   \begin{align*}
\slam e_u = \sum_{v \in \dzii(u)} \lambda_v e_v.
   \end{align*}
(We adopt the convention that $\sum_{v\in \varnothing}
x_v = 0$.) The weighted shifts desired are those which
are subnormal operators. According to \cite[Theorem
6.1.3 and Notation 6.1.9]{JJS}, the following
assertion holds.
   \begin{align} \label{IlB}
   \begin{minipage}{68ex}
{\em $\slam \in \ogr{\ell^2(V)}$ is subnormal if and
only if for any $u\in V$, there exists $($a unique
compactly supported\/$)$ Borel measure on $\rbb_+$,
denoted by $\mu_u^{\lambdab}$, such that}
   $$ \|S_{\lambdab}^n e_u\|^2 = \int_{\rbb_+} t^n d
\mu_u^{\lambdab}(t), \quad n\in \zbb_+.
   $$
   \end{minipage}
   \end{align}
The characterizations of subnormality of $\slam$ on
the directed tree $\tcal_{\eta, \kappa}$ can be found
in \cite[Corollary~6.2.2]{JJS}.

Matters are in hand for the statement of the fundamental
problem considered. Let $\tcal=(V,E)$ be a subtree of a
directed tree $\hat{\tcal}=(\hat{V},\hat{E})$ and let
$\lambdab=\{\lambda_v\}_{v\in V^{\circ}}$ be a system of
positive real numbers. We say that a weighted shift
$\slamh$ on $\tcalh$ with weights
$\lambdabh=\{\hat{\lambda}_v\}_{v\in \hat{V}^{\circ}}
\subseteq (0,\infty)$ is a {\em subnormal completion} of
$\lambdab$ on $\tcalh$ if $\slamh\in
\ogr{\ell^2(\hat{V})}$, $\lambdab \subseteq \lambdabh$,
i.e., $\lambda_v=\hat{\lambda}_v$ for all $v\in V^{\circ}$,
and $\slamh$ is subnormal. If such a completion exists, we
may sometimes say $\lambdab$ \textit{admits a subnormal
completion} on $\tcalh$. The {\em subnormal completion
problem} for $(\tcal,\tcalh)$ consists of seeking necessary
and sufficient conditions for a system
$\lambdab=\{\lambda_v\}_{v\in V^{\circ}} \subseteq
(0,\infty)$ to have a subnormal completion on $\tcalh$.

Following \cite{EJSY1}, the subnormal completion
problem for $(\tcal_{\eta,\kappa,p},
\tcal_{\eta,\kappa})$, where $p\in \nbb$, is called
the {\em $p$-genera\-tion subnormal completion problem
on $\tcal_{\eta,\kappa}$} (sometimes abbreviated to
{\em $p$-generation SCP on $\tcal_{\eta,\kappa}$}). In
this particular case, our initial data takes the form
   \begin{align*}
\lambdab = \{\lambda_v\}_{v\in V_{\eta,
\kappa,p}^{\circ}} = \{\lambda_{-\kappa+1}, \ldots,
\lambda_0\} \cup \{\lambda_{i,1}\}_{i\in J_{\eta}}
\cup \ldots \cup \{\lambda_{i,p}\}_{i\in J_{\eta}}.
   \end{align*}

   The following result, which gives the measure-theoretic
way of solving the $p$-generation subnormal completion
problem on $\tcal_{\eta,\kappa}$, is a consequence of
\cite[Lemma~ 4.7 and Theorem~4.9]{EJSY1}.
   \begin{lem} \label{nproc}
Suppose $\eta \in \nbbc$, $\kappa, p \in \nbb$ and
$\lambdab = \{\lambda_v\}_{v\in V_{\eta,\kappa,p}^{\circ}}
\subseteq (0,\infty)$ are given. Then the following
conditions are equivalent{\em :}
   \begin{enumerate}
   \item[(i)] $\lambdab$ admits a subnormal completion
on $\tcal_{\eta,\kappa}$,
   \item[(ii)] there exist Borel
probability measures $\{\mu_{i}\}_{i=1}^{\eta}$ on $\rbb_+$
which satisfy the following conditions{\em :}
   \begin{align}  \label{q-mom1}
&\int_{\rbb_+} s^{n} d\mu_{i}(s) =
\prod_{j=2}^{n+1}\lambda_{i,j}^{2}, \quad n\in
J_{p-1},\ i\in J_{\eta},
   \\ \label{q-mom2}
&\sum_{i=1}^{\eta} \lambda_{i,1}^{2} \int_{\rbb_+}
\frac{1}{s} d \mu_{i}(s) =1,
   \\ \notag
&\sum_{i=1}^{\eta}\lambda_{i,1}^{2} \int_{\rbb_+}
\frac{1}{s^{k+1}} d\mu_{i}(s) =
\frac{1}{\prod_{j=0}^{k-1}\lambda_{-j}^{2}}, \quad
k\in J_{\kappa-1},
   \\ \label{q-mom3}
&\sum_{i=1}^{\eta} \lambda_{i,1}^{2} \int_{\rbb_+}
\frac{1}{s^{\kappa+1}} d \mu_{i}(s) \Le
\frac{1}{\prod_{j=0}^{\kappa-1} \lambda_{-j}^{2}},
   \\ \label{q-mom4}
&\sup_{i\in J_{\eta}} \sup \supp{\mu_{i}} < \infty.
   \end{align}
   \end{enumerate}
Moreover, if $\{\mu_{i}\}_{i=1}^{\eta}$ are as in {\em
(ii)}, then there exists a subnormal completion
$\slamh$ of $\lambdab$ on $\tcal_{\eta,\kappa}$ such
that $\mu_{i,1}^{\lambdabh}=\mu_{i}$ for all $i\in
J_{\eta}$.
   \end{lem}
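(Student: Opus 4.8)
The plan is to deduce the equivalence, together with the concluding assertion, from the measure--operator dictionary \eqref{IlB} and the analysis of weighted shifts on $\tcal_{\eta,\kappa}$ carried out in \cite{EJSY1}. Recall that \cite[Lemma~4.7]{EJSY1} describes the relations the Borel measures $\mu_u^{\lambdabh}$ attached to the trunk vertices $0,-1,\dots,-\kappa$ must bear to the first-generation measures, and that \cite[Theorem~4.9]{EJSY1} handles the completion of finitely many branch weights to a bounded subnormal tail; most of what follows is the transcription of these two facts into the present moment notation.

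To prove (i)$\Rightarrow$(ii) I would assume that $\slamh$ is a subnormal completion of $\lambdab$ on $\tcal_{\eta,\kappa}$ and put $\mu_i:=\mu_{i,1}^{\lambdabh}$ for $i\in J_\eta$; each $\mu_i$ is a Borel probability measure by the $n=0$ instance of \eqref{IlB}. Since on the $i$th branch $\slamh$ acts as a unilateral weighted shift whose weights coincide with $\lambda_{i,2},\dots,\lambda_{i,p}$ through generation $p$, one gets $\int_{\rbb_+}s^{n}\,d\mu_i=\|\slamh^{n}e_{(i,1)}\|^{2}=\prod_{j=2}^{n+1}\lambda_{i,j}^{2}$ for $n\in J_{p-1}$, i.e.\ \eqref{q-mom1}. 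For the remaining conditions I would use that, a compactly supported measure being determined by its moments, a single-child relation forces $s\,d\mu_u^{\lambdabh}=\hat\lambda_v^{2}\,d\mu_v^{\lambdabh}$ ($v$ the child of $u$) and the branching at $0$ forces $s\,d\mu_0^{\lambdabh}=\sum_{i\in J_\eta}\lambda_{i,1}^{2}\,d\mu_i$. Evaluating the single-child relations on $\{0\}$ gives $\mu_{-k}^{\lambdabh}(\{0\})=0$ for $0\Le k\Le\kappa-1$ and $\mu_i(\{0\})=0$ for all $i$; carrying the branching relation up the trunk and comparing total masses then produces \eqref{q-mom2}, the intermediate equalities in (ii), and the inequality \eqref{q-mom3}, where the inequality (in place of an equality) appears only at $u=-\kappa$, whose measure may carry an atom at $0$. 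Finally \eqref{q-mom4} follows from $\sup\supp{\mu_i}=\lim_{n}\|\slamh^{n}e_{(i,1)}\|^{2/n}\Le\|\slamh\|^{2}<\infty$.

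For (ii)$\Rightarrow$(i) and the concluding assertion I would start from measures $\{\mu_i\}_{i=1}^{\eta}$ satisfying \eqref{q-mom1}--\eqref{q-mom4}; by \eqref{q-mom2} each $\int_{\rbb_+}s^{-1}\,d\mu_i$ is finite, so $\mu_i(\{0\})=0$ and $0<\int_{\rbb_+}s^{n}\,d\mu_i<\infty$ for every $n$. Keep $\hat\lambda_v=\lambda_v$ on $V_{\eta,\kappa,p}^{\circ}$, and for $j>p$ put $\hat\lambda_{i,j}^{2}=\big(\int s^{j-1}\,d\mu_i\big)\big/\big(\int s^{j-2}\,d\mu_i\big)$; by \eqref{q-mom1} this agrees with the given data at generation $p$, and by \eqref{q-mom4} the resulting weights are bounded above by $\sup_i\sup\supp{\mu_i}$, so $\slamh$ is a bounded weighted shift on $\tcal_{\eta,\kappa}$ (this branch-completion is precisely \cite[Theorem~4.9]{EJSY1}). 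To verify subnormality through \eqref{IlB} I would exhibit the measures $\mu_u^{\lambdabh}$: on the branches $d\mu_{i,j}^{\lambdabh}=s^{j-1}\big(\int t^{j-1}\,d\mu_i\big)^{-1}d\mu_i$, so in particular $\mu_{i,1}^{\lambdabh}=\mu_i$; at the branching vertex $d\mu_0^{\lambdabh}=\sum_{i\in J_\eta}\lambda_{i,1}^{2}\,s^{-1}\,d\mu_i$, which is a probability measure exactly by \eqref{q-mom2}; and for $1\Le k\Le\kappa$,
\[
d\mu_{-k}^{\lambdabh}=\Big(\prod_{j=0}^{k-1}\lambda_{-j}^{2}\Big)s^{-k}\,d\mu_0^{\lambdabh}+c_{k}\,\delta_{0},\qquad
c_{k}:=1-\Big(\prod_{j=0}^{k-1}\lambda_{-j}^{2}\Big)\sum_{i\in J_\eta}\lambda_{i,1}^{2}\!\int_{\rbb_+}\! s^{-(k+1)}\,d\mu_i,
\]
where the intermediate equalities in (ii) give $c_{k}=0$ for $k\Le\kappa-1$ and \eqref{q-mom3} gives $c_{\kappa}\Ge0$. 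These measures are supported in $[0,\sup_i\sup\supp{\mu_i}]$, and a direct computation using the recursive structure of $\tcal_{\eta,\kappa}$ confirms $\int_{\rbb_+}s^{n}\,d\mu_u^{\lambdabh}=\|\slamh^{n}e_u\|^{2}$ for every vertex $u$ and every $n\in\zbb_+$; by \eqref{IlB} $\slamh$ is subnormal, hence a subnormal completion of $\lambdab$ on $\tcal_{\eta,\kappa}$ with $\mu_{i,1}^{\lambdabh}=\mu_i$.

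I expect the only genuinely delicate point to be the accounting for the atoms at $0$ along the trunk: one must verify that the $\mu_{-k}^{\lambdabh}$ constructed above are honest (nonnegative) probability measures, and it is exactly this that dictates the form of the conditions---a chain of equalities \eqref{q-mom2} capped by the single inequality \eqref{q-mom3}---and that forces the negative moments $\int s^{-(k+1)}\,d\mu_i$ to be finite for $k$ up to $\kappa$. Since \cite[Lemma~4.7 and Theorem~4.9]{EJSY1} already establish all of this, in practice the argument collapses to the identifications and verifications indicated above.
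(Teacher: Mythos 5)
Your proposal is correct and follows essentially the same route as the paper, which offers no argument beyond citing \cite[Lemma~4.7 and Theorem~4.9]{EJSY1}: you invoke exactly those two results and merely flesh out the translation into moment conditions, including the key bookkeeping of atoms at $0$ along the trunk that explains why the conditions form a chain of equalities capped by the single inequality \eqref{q-mom3} at the root. The details you supply (the branch completion by moment ratios, the consistency measures at the branching vertex and trunk, and the boundedness estimates from \eqref{q-mom2} and \eqref{q-mom4}) are all sound and consistent with the cited machinery.
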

In \cite[Theorem~5.1]{EJSY1} we gave an explicit
solution of the $1$-generation subnormal completion
problem on $\tcal_{\eta,1}$ written in terms of
initial data. As shown in \cite[Theorem~6.2]{EJSY1},
the $1$-generation subnormal completion problem on
$\tcal_{\eta,\kappa}$, where $\kappa\in \nbb$, reduces
to seeking necessary and sufficient conditions for a
system $\lambdab=\{\lambda_v\}_{v\in
V_{\eta,\kappa,1}^{\circ}} \subseteq (0,\infty)$ to
have a $2$-generation flat subnormal completion on
$\tcal_{\eta, \kappa}$. It was also proved in
\cite[Theorem~8.3]{EJSY1} that the problem of finding
a $p$-generation flat subnormal completion on
$\tcal_{\eta,\kappa}$ can be solved by using the
well-known solutions of the subnormal completion
problem for unilateral weighted shifts given in
\cite{Sta,CuF91,CuF93,CuF94,CuF1996,JJKS2011}.
However, in most cases these solutions are not written
explicitly in terms of initial data. This means that
we have no explicit (i.e., written in terms of initial
data) solution of the $p$-generation subnormal
completion problem on $\tcal_{\eta,\kappa}$ for $p\Ge
2$, even when $\kappa=1$. It is the right moment to
make the following important observation which is
implicitly contained in the proof of
\cite[Theorem~7.1]{EJSY1}.
   \begin{thm} \label{finite-su}
Suppose $\eta \in \nbb_2$, $\kappa, p \in \nbb$ and
$\lambdab = \{\lambda_v\}_{v\in V_{\eta,\kappa,p}^{\circ}}
\subseteq (0,\infty)$ are given. If $\lambdab$ admits a
subnormal completion on $\tcal_{\eta,\kappa}$, then it
admits a subnormal completion $\slamh$ on
$\tcal_{\eta,\kappa}$ such that
   \begin{align*}
\card{\supp{\mu_{i,1}^{\lambdabh}}} \Le
\Big\lfloor\frac{\kappa+p+2}{2}\Big\rfloor, \quad i\in
J_{\eta},
   \end{align*}
where $\lfloor x \rfloor = \min\big\{n\in \zbb_+\colon n
\Le x < n+1\big\}$ for $x\in \rbb_+$.
   \end{thm}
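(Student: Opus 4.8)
The plan is to descend, via Lemma~\ref{nproc}, to the measure‑theoretic reformulation of the problem and then, branch by branch, to replace the associated measure by a finitely atomic one with the advertised number of atoms. Since $\lambdab$ admits a subnormal completion on $\tcal_{\eta,\kappa}$, Lemma~\ref{nproc} produces Borel probability measures $\{\mu_i\}_{i=1}^{\eta}$ on $\rbb_+$ satisfying \eqref{q-mom1}--\eqref{q-mom4}. The key observation is that every one of these conditions refers to a given $\mu_i$ only through the integrals $\int_{\rbb_+}s^{n}\,d\mu_i(s)$ with $n$ running over the \emph{finite block of consecutive integers} $\{-(\kappa+1),\ldots,-1,0,1,\ldots,p-1\}$: the orders $0,\ldots,p-1$ enter through the normalization $\mu_i(\rbb_+)=1$ and \eqref{q-mom1}, the order $-1$ through \eqref{q-mom2}, the orders $-2,\ldots,-\kappa$ through the (unlabeled) equality chain preceding \eqref{q-mom3}, and the order $-(\kappa+1)$ through \eqref{q-mom3}; moreover all $\kappa+p+1$ of these integrals are finite (the negative ones by \eqref{q-mom2}--\eqref{q-mom3}, the positive ones by \eqref{q-mom4}), so in particular $\mu_i(\{0\})=0$.

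It therefore suffices to produce, for each $i\in J_\eta$, a finitely atomic probability measure $\hat\mu_i$ supported in $(0,\infty)$ such that $\int_{\rbb_+}s^{n}\,d\hat\mu_i(s)=\int_{\rbb_+}s^{n}\,d\mu_i(s)$ for $n=-(\kappa+1),\ldots,p-1$ and $\card{\supp{\hat\mu_i}}\Le\lfloor\frac{\kappa+p+2}{2}\rfloor=\lfloor\frac{\kappa+p}{2}\rfloor+1$. Indeed, $\{\hat\mu_i\}_{i=1}^{\eta}$ then again satisfies \eqref{q-mom1}--\eqref{q-mom3} (which see only the listed moments) and \eqref{q-mom4} (each $\hat\mu_i$ has finite support and $J_\eta$ is finite --- this is the one and only place where the hypothesis $\eta\in\nbb_2$ is used), so the ``moreover'' clause of Lemma~\ref{nproc} yields a subnormal completion $\slamh$ of $\lambdab$ on $\tcal_{\eta,\kappa}$ with $\mu_{i,1}^{\lambdabh}=\hat\mu_i$, and hence $\card{\supp{\mu_{i,1}^{\lambdabh}}}\Le\lfloor\frac{\kappa+p+2}{2}\rfloor$ for every $i$, as claimed.

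To build $\hat\mu_i$ I would first convert the Laurent block of moments into an honest polynomial one. The positive measure $\nu_i$ defined by $d\nu_i(s)=s^{-(\kappa+1)}\,d\mu_i(s)$ is finite on $(0,\infty)$ by the finiteness noted above, and $\int_{\rbb_+}s^{j}\,d\mu_i=\int_{(0,\infty)}s^{\,j+\kappa+1}\,d\nu_i$, so matching the moments of $\mu_i$ of orders $-(\kappa+1),\ldots,p-1$ is the same as matching the moments $m_n$ of $\nu_i$ of orders $n=0,1,\ldots,\kappa+p$. This is a truncated Stieltjes moment problem on $[0,\infty)$ with finite, consistent data, so (by the classical theory of such problems --- equivalently, by the theory of principal representations of the Chebyshev system $1,s,\ldots,s^{\kappa+p}$ on $[0,\infty)$; cf.\ \cite{CuF91,CuF1996}) it has a finitely atomic solution $\hat\nu_i$ with at most $\lfloor\frac{\kappa+p}{2}\rfloor+1$ atoms in $[0,\infty)$. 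Setting $d\hat\mu_i(s)=s^{\kappa+1}\,d\hat\nu_i(s)$ now annihilates a possible atom of $\hat\nu_i$ at the origin, retains at most $\lfloor\frac{\kappa+p}{2}\rfloor+1$ atoms, all lying in $(0,\infty)$, restores the required moments of orders $-(\kappa+1),\ldots,p-1$, and in particular gives $\hat\mu_i(\rbb_+)=m_{\kappa+1}=\mu_i(\rbb_+)=1$, so $\hat\mu_i$ is a probability measure.

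The one genuinely delicate step is the \emph{sharp} atom count. A direct Carath\'eodory (or Tchakaloff‑type) argument applied to the $\kappa+p+1$ linear functionals $\mu\mapsto\int_{\rbb_+}s^{n}\,d\mu$ would only deliver $\kappa+p+1$ atoms, roughly twice too many; halving this is exactly what the Chebyshev/Hankel structure provides, via the positive semidefiniteness and the rank at most $\lfloor\frac{\kappa+p}{2}\rfloor+1$ of the Hankel matrix $(m_{a+b})_{a,b=0}^{\lfloor(\kappa+p)/2\rfloor}$ together with a minimal (``flat‑type'') extension of it. This is the step I would isolate and quote precisely from the truncated‑moment‑problem literature; once it is in hand, the rest --- the bookkeeping with the moment blocks and the harmless boundary cases ($p=1$, where \eqref{q-mom1} is vacuous, and $\kappa\Ge1$, which guarantees that order $0$ really lies in the block) --- is routine.
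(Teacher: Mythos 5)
Your argument is correct and is essentially the paper's own proof made explicit: the paper likewise reduces to the measure-theoretic data of Lemma~\ref{nproc}, passes (via the machinery of \cite[Theorem~7.1]{EJSY1}) to a truncated Stieltjes moment problem on each branch, and invokes \cite[Theorems~5.1(iii) and 5.3(iii)]{CuF91} for a representing measure with at most $\lfloor(\kappa+p+2)/2\rfloor$ atoms supported in $(0,\infty)$. The only (harmless) imprecision is that if $\hat\nu_i$ had an atom at $0$ then $\hat\mu_i$ would satisfy $\int_{\rbb_+} s^{-(\kappa+1)}\,d\hat\mu_i(s) \le m_0$ rather than equality, but \eqref{q-mom3} is an inequality in exactly that direction, so nothing is lost.
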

   \begin{proof}
Applying \cite[Theorems~5.1(iii) and 5.3(iii)]{CuF91}
and arguing as in the proof of the implication
(iii)$\Rightarrow$(iv) of \cite[Theorem~7.1]{EJSY1},
we may assume without loss of generality that for
every $i\in J_{\eta}$, the measure $\rho_i$ appearing
in the proof of the implication (iv)$\Rightarrow$(v)
of \cite[Theorem 7.1]{EJSY1} satisfies the following
condition:
   \begin{align*}
\text{$\card{\supp{\rho_i}} \Le
\Big\lfloor\frac{\kappa+p+2}{2}\Big\rfloor$ and
$\supp{\rho_i} \subseteq (0,\infty)$.}
   \end{align*}
Next, by arguing as in the proof of the implication
(iv)$\Rightarrow$(v) of \cite[Theorem 7.1]{EJSY1}, we
get a subnormal completion with the desired property.
   \end{proof}
It follows from Theorem~ \ref{finite-su} that if $\eta
\in \nbb_2$ and the $2$-generation subnormal
completion problem on $\tcal_{\eta,1}$ has a solution
for given data $\lambdab = \{\lambda_v\}_{v\in
V_{\eta,1,2}^{\circ}} \subseteq (0,\infty)$, then one
can always find a subnormal completion $\slamh$ of
$\lambdab$ on $\tcal_{\eta,1}$ with the property that
each measure $\mu_{i,1}^{\lambdabh}$ is $1$- or
$2$-atomic.
   \section{\label{Sect3}Preparatory lemmas} One of the goals of
this paper is to explore when initial data
$\lambdab=\{\lambda_v\}_{v\in V_{\eta,1,2}^{\circ}}$ on
$\tcal_{\eta,1,2}$ admits a subnormal completion $\slamh$
on $\tcal_{\eta,1}$ such that each of the measures
$\mu_{i,1}^{\lambdabh}$ (see \eqref{IlB}) is $2$-atomic
(under present circumstances, the measures
$\mu_{i,1}^{\lambdabh}$ may be taken to be $1$- or
$2$-atomic due to Theorem~\ref{finite-su}). We first
require some background on the Stampfli completion of three
increasing weights to the weight sequence for a subnormal
unilateral weighted shift (cf.\ \cite{Sta}).

In \cite{Sta} the author gives an explicit
construction of the completion of an initial finite
sequence of three weights $x,y,z$ satisfying $0 < x <
y < z$ to the sequence of positive weights
$\{\alpha_n\}_{n=0}^{\infty}$ for a (bounded,
injective) subnormal unilateral weighted shift
$W_{\alpha}$. This includes a construction of the
associated Berger measure of $W_{\alpha}$ (i.e., a
unique Borel probability measure $\xi$ on $\rbb_+$
such that for any $n\in \nbb$, $\gamma_n :=
\alpha_{0}^2 \cdots \alpha_{n-1}^2 = \int_{\rbb_+} t^n
d \xi(t)$), which turns out to be $2$-atomic. The
completion sequence $\{\alpha_n\}_{n=0}^{\infty}$,
which is called the {\em Stampfli completion of
$(x,y,z)$}, is customarily denoted by
$(x,y,z)^{\wedge}$; it is known that the moment
sequence $\{\gamma_n\}_{n=0}^{\infty}$ (with
$\gamma_{0}=1$) for $W_{\alpha}$ satisfies a
recursion.
   \begin{dfn}
The Berger measure of $W_{\alpha}$ will be called the
{\em Berger measure associated with the Stampfli
completion} $(x,y,z)^{\wedge}$ of $(x,y,z)$ and
denoted by $\xi_{x,y,z}$.
   \end{dfn}
One may see \cite{CuF91,CuF93,CuF94} for an
alternative approach to these same results. We note
also that one may consider the cases $0 < x < y = z$
(yielding a $2$-atomic measure with an atom at zero)
and $0 < x = y = z$, and this last case yields a
completion whose Berger measure is $1$-atomic.

Our technique will be to try to choose $y_i$ and
$z_i$, which will become respectively
$\hat\lambda_{i,3}$ and $\hat\lambda_{i,4}$ of the
completion $\slamh$, in such a way that the Berger
measures associated to $(x_{i},y_{i},z_{i})^{\wedge}$
will become the $\mu_{i,1}^{\lambdabh}$ and have good
properties (and of course they will automatically be
$2$-atomic). Figure~ \ref{Fig0-a} summarizes the task
and our notation.
\begin{figure}[ht]
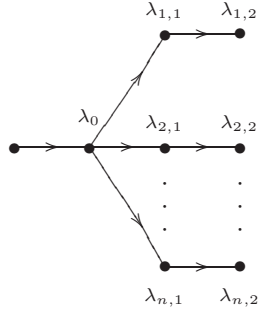

\centerline{ \xy (30,8);(36,8)**@{-} ?>*\dir{>},
(36,8);(40,8)**@{-}, (40,8);(47,18)**@{-} ?>*\dir{>},
(47,18);(50,22.8)**@{-},
(40,8);(46,8)**@{-}?>*\dir{>},(46,8);(50,8)**@{-},
(50,8);(56,8)**@{-}?>*\dir{>},(56,8);(60,8)**@{-},
(50,23);(56,23)**@{-}?>*\dir{>},(56,23);(60,23)**@{-},(40,8);(47,-2.66)**@{-}
?>*\dir{>}, (47,-2.66);(50,-8)**@{-},
(50,-8);(56,-8)**@{-}
?>*\dir{>},(56,-8);(60,-8)**@{-},
(40,12)*{_{\lambda_{0}}},(50,26)*{_{\lambda_{1,1}}},(50,11)*{_{\lambda_{2,1}}},
(50,-12)*{_{\lambda_{n,1}}},(50,3)*{\cdot},(50,0)*{\cdot},
(50,-3)*{\cdot},(60,3)*{\cdot},(60,0)*{\cdot},(60,-3)*{\cdot},
(60,26)*{_{\lambda_{1,2}}},(60,11)*{_{\lambda_{2,2}}},(60,-12)*{_{\lambda_{n,2}}},
(50,22.8)*{\bullet},(60,23)*{\bullet},
(30,7.7)*{\bullet},(40,7.7)*{\bullet},(50,7.7)*{\bullet},(60,7.7)*{\bullet},
(50,-8)*{\bullet},(60,-8)*{\bullet}
\endxy } \vspace*{5pt}
\caption{An illustration of a $2$-generation SCP on
$\tcal_{\eta,1}$ \newline with $\lambda_{0}$,
$\{\lambda_{i,1}\}_{i\in J_{\eta}}$ and
$\{\lambda_{i,2}\}_{i\in J_{\eta}}$ as given data.
\label{Fig0-a}} \centering
\end{figure}

We begin by calculating the first two ``negative''
moments of the Berger measure associated with the
Stampfli completion $(x,y,z)^{\wedge}$ of $(x,y,z)$.
   \begin{lem} \label{lem:StampfliInts}
Suppose that $(x,y,z)\in \rbb$ are such that $0 < x <
y < z$. Then
   \begin{align} \label{eq:int1ovsdmu}
\int_{\rbb_+} \frac{1}{s} d\xi_{x,y,z}(s) &=
\frac{x^{4}-2x^{2}y^{2}+
y^{2}z^{2}}{x^{2}y^{2}(z^{2}-y^{2})} = \frac{1}{x^{2}}
\left[\frac{1-2\frac{y^{2}}{x^{2}}
+\frac{y^{2}}{x^{2}}\frac{z^{2}}{x^{2}}}{\frac{y^{2}}{x^{2}}
(\frac{z^{2}}{x^{2}}-\frac{y^{2}}{x^{2}})} \right]
   \end{align}
and
   \begin{align} \label{eq:int1ovsSQdmu}
   \begin{aligned}
\int_{\rbb_+} \frac{1}{s^2}d\xi_{x,y,z}(s) & =
\frac{-x^{6}+y^{2}z^{4}+x^{4}(y^{2}+2z^{2})+x^{2}(y^{4}-4y^{2}z^{2})}
{x^{4}(y^{3}-yz^{2})^2}
   \\
& = \frac{1}{x^4} \left[\frac{-1+\frac{y^2}{x^2}
\cdot\frac{z^4}{x^4} +\frac{y^2}{x^2}
+\frac{2z^2}{x^2}+\frac{y^4}{x^4}
-\frac{4y^2}{x^2}\cdot\frac{z^2}{x^2}}
{\frac{y^2}{x^2} \big(\frac{z^2}{x^2} -
\frac{y^2}{x^2} \big)^2} \right].
   \end{aligned}
   \end{align}
   \end{lem}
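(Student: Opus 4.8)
The plan is to combine two facts recalled above: that $\xi:=\xi_{x,y,z}$ is $2$-atomic with both atoms in $(0,\infty)$ (this is where the hypothesis $0<x$ enters), and that the moment sequence $\gamma_n:=\int_{\rbb_+}s^n\,d\xi(s)$, $n\in\zbb_+$, of the associated Stampfli completion obeys an order-two linear recursion $\gamma_{n+2}=\varphi_1\gamma_{n+1}+\varphi_0\gamma_n$ for $n\in\zbb_+$, with $\gamma_0=1$ and $\gamma_1=x^2$, $\gamma_2=x^2y^2$, $\gamma_3=x^2y^2z^2$. Writing the recursion at $n=0$ and $n=1$ gives the linear system $\varphi_0+x^2\varphi_1=x^2y^2$ and $x^2\varphi_0+x^2y^2\varphi_1=x^2y^2z^2$; solving it I would obtain
\[
\varphi_1=\frac{y^2(z^2-x^2)}{y^2-x^2},\qquad
\varphi_0=-\,\frac{x^2y^2(z^2-y^2)}{y^2-x^2},
\]
both well defined, and with $\varphi_0\neq 0$, precisely because $0<x<y<z$.

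Next, since both atoms $s_0,s_1$ of $\xi$ are strictly positive, each satisfies $s_i^{\,m+2}=\varphi_1s_i^{\,m+1}+\varphi_0s_i^{\,m}$ for \emph{every} $m\in\zbb$, and integrating against $\xi$ shows that the two-sided sequence $\gamma_n=\int_{\rbb_+}s^n\,d\xi(s)$, $n\in\zbb$ (all terms finite because $\supp{\xi}\subseteq(0,\infty)$ is a finite set), satisfies the same recursion for all $n\in\zbb$. Evaluating it at $n=-1$ yields $\gamma_1=\varphi_1\gamma_0+\varphi_0\gamma_{-1}$, hence
\[
\int_{\rbb_+}\frac1s\,d\xi(s)=\gamma_{-1}=\frac{x^2-\varphi_1}{\varphi_0},
\]
and at $n=-2$ it yields $\gamma_0=\varphi_1\gamma_{-1}+\varphi_0\gamma_{-2}$, hence
\[
\int_{\rbb_+}\frac1{s^2}\,d\xi(s)=\gamma_{-2}=\frac{1-\varphi_1\gamma_{-1}}{\varphi_0}.
\]
Substituting the explicit $\varphi_0,\varphi_1$ above into these two identities and clearing denominators then produces \eqref{eq:int1ovsdmu} and \eqref{eq:int1ovsSQdmu}; the bracketed alternative forms follow at once by dividing numerator and denominator by $x^2$, respectively $x^4$, and rewriting everything through the ratios $y^2/x^2$ and $z^2/x^2$. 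A variant that avoids negative-index recursion is to use $\xi=\rho_0\delta_{s_0}+\rho_1\delta_{s_1}$ with $\rho_0+\rho_1=1$, $s_0+s_1=\varphi_1$, $s_0s_1=-\varphi_0$, together with $\rho_0s_0+\rho_1s_1=\gamma_1$ and $\rho_0s_0^2+\rho_1s_1^2=\gamma_2$, to rewrite $\gamma_{-1}=\big((s_0+s_1)-\gamma_1\big)/(s_0s_1)$ and $\gamma_{-2}=\big((s_0+s_1)^2-2s_0s_1-\gamma_2\big)/(s_0s_1)^2$.

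I expect no conceptual obstacle here: the content is a computation. The only laborious step is the final algebraic simplification, in particular expanding $\gamma_{-2}$ into the stated polynomial $-x^6+y^2z^4+x^4(y^2+2z^2)+x^2(y^4-4y^2z^2)$ over the common denominator $x^4(y^3-yz^2)^2$, which is routine but tedious. The two points deserving explicit care are that the moment recursion holds from the index $n=0$ on — so that $\varphi_0$ and $\varphi_1$ are genuinely pinned down by $\gamma_0,\dots,\gamma_3$, which is part of the description of the Stampfli completion recalled above — and that the atoms $s_0,s_1$ are strictly positive (legitimizing the passage to negative indices) with $\varphi_0\neq 0$ (legitimizing the divisions), both of which rest on the strict inequalities $0<x<y<z$.
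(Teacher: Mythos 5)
Your proof is correct, and the algebra checks out: your $\varphi_0,\varphi_1$ coincide with the quantities $\psi_0=-x^2y^2\frac{z^2-y^2}{y^2-x^2}$ and $\psi_1=y^2\frac{z^2-x^2}{y^2-x^2}$ that the paper imports from Curto--Fialkow, and running the recursion backwards to $n=-1$ and $n=-2$ reproduces exactly the stated numerators and the denominator $x^4(y^3-yz^2)^2=x^4y^2(z^2-y^2)^2$. The route is genuinely different in organization, though it rests on the same structural input. The paper quotes from \cite[Example 3.14]{CuF93} the explicit $2$-atomic form $\xi_{x,y,z}=\rho\delta_{s_0}+(1-\rho)\delta_{s_1}$ with $s_{0,1}=\frac12\bigl(\psi_1\mp\sqrt{\psi_1^2+4\psi_0}\bigr)$ and $\rho=\frac{s_1-x^2}{s_1-s_0}$, and then computes $\int s^{-1}d\xi$ and $\int s^{-2}d\xi$ directly by substitution; this forces one to manipulate the square roots, which cancel only at the end. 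Your two-sided recursion $\gamma_{n+2}=\varphi_1\gamma_{n+1}+\varphi_0\gamma_n$ keeps everything rational in $(x^2,y^2,z^2)$ and gives $\gamma_{-1}=(x^2-\varphi_1)/\varphi_0$ and $\gamma_{-2}=(1-\varphi_1\gamma_{-1})/\varphi_0$ in two lines, which is arguably cleaner. One point of rigor: the passage to negative indices requires not just that the atoms are strictly positive (which guarantees finiteness of $\gamma_{-1},\gamma_{-2}$ and $\varphi_0\neq0$), but that each atom is a root of $t^2-\varphi_1t-\varphi_0$; you attribute the former role to positivity but the latter fact is the one doing the work, and it should be cited from the same source the paper uses (the atoms in \cite[Example 3.14]{CuF93} are by construction the roots of that characteristic polynomial) or derived from $c_0s_0^n+c_1s_1^n=0$ for all $n\Ge0$ with $s_0\neq s_1$. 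Your closing ``variant'' via $s_0+s_1=\varphi_1$, $s_0s_1=-\varphi_0$ already contains this, so the gap is purely presentational.
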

   \begin{proof}
To simplify notation, set $\xi=\xi_{x,y,z}$. By
\cite[Example 3.14]{CuF93}, we have
   \begin{align*}
\xi= \rho \delta_{s_0} + (1-\rho) \delta_{s_1},
   \end{align*}
where $\rho=\frac{s_1-x^2}{s_1-s_0}$, $s_0=\frac 12(\psi_1
- \sqrt{\psi_1^2 + 4 \psi_0})$ and $s_1=\frac 12(\psi_1 +
\sqrt{\psi_1^2 + 4 \psi_0})$ with $\psi_0= - x^2y^2
\frac{z^2-y^2}{y^2-x^2}$ and $\psi_1= y^2
\frac{z^2-x^2}{y^2-x^2}$. Straightforward computations now
yield \eqref{eq:int1ovsdmu} and \eqref{eq:int1ovsSQdmu}.
   \end{proof}
   Next we investigate the function $f$ which comes from
the expression appearing on the right-hand side of the
second equality in \eqref{eq:int1ovsdmu}
   \begin{lem} \label{le:haty}
Let $f$ be a real function on $\varOmega=\{(u, v)\in
\rbb^2\colon v>u>1\}$ given~ by
   \begin{align*}
f(u,v) = \frac{1-2u+uv}{u (v-u)}, \quad (u, v)\in
\varOmega.
   \end{align*}
Then $f(\varOmega) = (1, \infty)$ and for any $(u,
v)\in \varOmega$ and $r\in(1,\infty)$,
   \begin{equation}  \label{eq:choicehatz}
f(u,v)=r \quad \text{if and only if} \quad v= \frac{1
- 2u + r u^2}{(r-1)u}.
   \end{equation}
Further, for any $u\in (1,\infty)$, the map
$\varphi_{u}\colon (1,\infty) \to (u, \infty)$ defined
by
   \begin{align} \label{fiju}
\varphi_{u}(r) =\frac{1 - 2u + r u^2} {(r-1)u}, \quad
r\in (1,\infty),
   \end{align}
is a bijection.
   \end{lem}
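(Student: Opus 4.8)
The plan is to prove the three assertions in order of logical dependence: first the equivalence \eqref{eq:choicehatz}, which amounts to solving a linear equation in $v$; then the bijectivity of $\varphi_{u}$, which follows at once from rewriting the defining formula \eqref{fiju} in a transparent ``partial fraction'' form; and finally the identity $f(\varOmega)=(1,\infty)$, whose inclusion $\subseteq$ comes from an analogous rewriting of $f$ and whose reverse inclusion is immediate from \eqref{eq:choicehatz} combined with the surjectivity half of the previous step.

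First I would prove \eqref{eq:choicehatz}. Fix $(u,v)\in\varOmega$ and $r\in(1,\infty)$. Since $u(v-u)>0$, the equation $f(u,v)=r$ is equivalent to $1-2u+uv=r\,u(v-u)$, i.e.\ to $uv(r-1)=1-2u+ru^{2}$; because $r>1$ and $u>0$ we may divide by $(r-1)u$ to get $v=\dfrac{1-2u+ru^{2}}{(r-1)u}=\varphi_{u}(r)$, which is exactly \eqref{eq:choicehatz}.

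Next, for the bijectivity of $\varphi_{u}$, the key observation is the identity
   \begin{align*}
\varphi_{u}(r)=u+\frac{(u-1)^{2}}{u(r-1)}, \quad r\in(1,\infty),
   \end{align*}
which one verifies by writing the numerator $1-2u+ru^{2}$ as $u^{2}(r-1)+(u-1)^{2}$. Since $u>1$ we have $(u-1)^{2}>0$, so $r\mapsto\dfrac{(u-1)^{2}}{u(r-1)}$ is a strictly decreasing continuous bijection of $(1,\infty)$ onto $(0,\infty)$; adding the constant $u$ shows that $\varphi_{u}$ is a strictly decreasing continuous bijection of $(1,\infty)$ onto $(u,\infty)$, which is the last assertion. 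A computation of the same flavour gives $f(u,v)-1=\dfrac{(u-1)^{2}}{u(v-u)}$ for $(u,v)\in\varOmega$, and since $u>1$ and $v>u$ the right-hand side is positive, so $f(\varOmega)\subseteq(1,\infty)$. Conversely, given $r\in(1,\infty)$, fix any $u\in(1,\infty)$ (say $u=2$) and put $v=\varphi_{u}(r)$; by the bijectivity just established $v\in(u,\infty)$, hence $(u,v)\in\varOmega$, and by \eqref{eq:choicehatz} we have $f(u,v)=r$. Therefore $f(\varOmega)=(1,\infty)$.

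As for the main difficulty: there is essentially none of substance. The whole lemma is elementary once one spots the two ``partial fraction'' rewritings of $\varphi_{u}$ and of $f-1$; the only point requiring a little care is to keep track of which of the strict inequalities $u>1$, $v>u$, $r>1$ guarantees that the relevant quantities are positive, so that the divisions above are legitimate and the endpoints of the ranges $(1,\infty)$ and $(u,\infty)$ come out correctly.
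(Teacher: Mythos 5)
Your proof is correct and follows essentially the same route as the paper, which simply declares these verifications routine; your identities $\varphi_{u}(r)=u+\frac{(u-1)^{2}}{u(r-1)}$ and $f(u,v)-1=\frac{(u-1)^{2}}{u(v-u)}$ are a clean way to carry out exactly the computations the paper leaves to the reader. No gaps.
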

   \begin{proof}
It is a routine matter to verify that for any $u\in
(1,\infty)$, the function $\varphi_{u}$ is a
well-defined bijection from $(1,\infty)$ to $(u,
\infty)$. Clearly, the function $f$ is well defined.
Since $(u,\varphi_{u}(r)) \in \varOmega$ and
$f(u,\varphi_{u}(r))=r$ for all $r\in (1,\infty)$ and
$u\in (1,\infty)$, we deduce that $(1, \infty)
\subseteq f(\varOmega)$. Next, a simple argument shows
that $f(\varOmega) \subseteq (1,\infty)$, so
$f(\varOmega) = (1, \infty)$. It is a computation to
show that \eqref{eq:choicehatz} holds.
   \end{proof}
   Corollary \ref{sobot-a} below provides more information
on the behaviour of the first ``negative'' moment of the
Berger measure appearing in Lemma \ref{lem:StampfliInts}.
   \begin{cor} \label{sobot-a}
Let $x, y\in \rbb$ be such that $0 < x < y$. Then for
any $z\in (y,\infty)$, there exists a unique $r\in
(1,\infty)$ such that
   \begin{align}  \label{arov-1}
\int_{\rbb_+} \frac{1}{s} d\xi_{x,y,z}(s)=
r\frac{1}{x^2}.
   \end{align}
Conversely, for any $r \in (1,\infty)$, there exists a
unique $z\in (y,\infty)$ such that {\em
\eqref{arov-1}} holds; the number $z$ is determined by
the formula $v=\varphi_{u}(r)$ with $u =
\frac{y^2}{x^2}$ and $v = \frac{z^2}{x^2}$.
   \end{cor}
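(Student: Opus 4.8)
The plan is to reduce Corollary~\ref{sobot-a} directly to Lemma~\ref{le:haty} via the substitution $u=\frac{y^2}{x^2}$, $v=\frac{z^2}{x^2}$. First I would observe that since $0<x<y$ we have $u>1$, and as $z$ ranges over $(y,\infty)$ the quantity $v$ ranges bijectively (indeed smoothly and strictly monotonically, via $z\mapsto z^2/x^2$) over $(u,\infty)$; hence pairs $(u,v)$ arising this way are exactly the elements of $\varOmega$ with first coordinate equal to our fixed $u$. Then I would rewrite the second equality in \eqref{eq:int1ovsdmu} of Lemma~\ref{lem:StampfliInts}: a glance at the bracketed expression shows
\begin{align*}
\int_{\rbb_+}\frac1s\,d\xi_{x,y,z}(s)=\frac{1}{x^2}\cdot\frac{1-2u+uv}{u(v-u)}=\frac{1}{x^2}\,f(u,v),
\end{align*}
so that \eqref{arov-1} is equivalent to $f(u,v)=r$.

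Granting this identification, both halves of the Corollary follow from Lemma~\ref{le:haty}. For the first assertion: given $z\in(y,\infty)$, the pair $(u,v)$ lies in $\varOmega$, so $r:=f(u,v)\in f(\varOmega)=(1,\infty)$, and this $r$ is by definition the unique value making $f(u,v)=r$; translating back gives the unique $r\in(1,\infty)$ with \eqref{arov-1}. For the converse: given $r\in(1,\infty)$, Lemma~\ref{le:haty} (via \eqref{eq:choicehatz} and the bijectivity of $\varphi_u$) yields the unique $v\in(u,\infty)$ with $f(u,v)=r$, namely $v=\varphi_u(r)$; then $z:=x\sqrt v$ is the unique element of $(y,\infty)$ with $v=z^2/x^2$, and for this $z$ equation \eqref{arov-1} holds. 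Uniqueness of $z$ follows from injectivity of $z\mapsto z^2/x^2$ on $(y,\infty)$ together with the uniqueness of $v$.

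The only real bookkeeping is the algebraic verification that the bracketed fraction in the second equality of \eqref{eq:int1ovsdmu} is literally $f(u,v)$ with $u=y^2/x^2$, $v=z^2/x^2$ — i.e. that $1-2\frac{y^2}{x^2}+\frac{y^2}{x^2}\frac{z^2}{x^2}$ over $\frac{y^2}{x^2}\big(\frac{z^2}{x^2}-\frac{y^2}{x^2}\big)$ matches the defining formula for $f$. This is immediate term-by-term, so I would not expect any genuine obstacle; the statement is essentially a change-of-variables corollary of the preceding two lemmas, and the proof can be kept to a few lines. If one wants to be careful, the single point worth stating explicitly is that $z\mapsto z^2/x^2$ carries $(y,\infty)$ bijectively onto $(y^2/x^2,\infty)=(u,\infty)$, which legitimizes moving freely between the variables $z$ and $v$.

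\begin{proof}
Put $u=\frac{y^2}{x^2}$ and $v=\frac{z^2}{x^2}$. Since $0<x<y$, we have $u>1$, and the map $z\mapsto \frac{z^2}{x^2}$ is a strictly increasing bijection of $(y,\infty)$ onto $(u,\infty)$; in particular $(u,v)\in\varOmega$ precisely when $z\in(y,\infty)$. Comparing the defining formula for $f$ in Lemma~\ref{le:haty} with the second equality in \eqref{eq:int1ovsdmu}, we obtain
\begin{align*}
\int_{\rbb_+}\frac{1}{s}\,d\xi_{x,y,z}(s)=\frac{1}{x^2}\,f(u,v),\qquad z\in(y,\infty).
\end{align*}
Hence \eqref{arov-1} is equivalent to $f(u,v)=r$.

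Fix $z\in(y,\infty)$. Then $(u,v)\in\varOmega$, so by Lemma~\ref{le:haty}, $r:=f(u,v)\in f(\varOmega)=(1,\infty)$, and $r$ is the only element of $(1,\infty)$ with $f(u,v)=r$; by the displayed equivalence this is the unique $r\in(1,\infty)$ satisfying \eqref{arov-1}.

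Conversely, fix $r\in(1,\infty)$. By Lemma~\ref{le:haty}, $v=\varphi_{u}(r)$ is the unique element of $(u,\infty)$ with $f(u,v)=r$. Since $z\mapsto\frac{z^2}{x^2}$ is a bijection of $(y,\infty)$ onto $(u,\infty)$, there is a unique $z\in(y,\infty)$ with $\frac{z^2}{x^2}=v$, namely $z=x\sqrt{v}$, and for this $z$ we have $f(u,v)=r$, i.e. \eqref{arov-1} holds. Uniqueness of $z$ follows from the uniqueness of $v$ and the injectivity of $z\mapsto\frac{z^2}{x^2}$ on $(y,\infty)$. This proves the formula $v=\varphi_{u}(r)$ for the corresponding $z$.
\end{proof}
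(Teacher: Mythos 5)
Your proposal is correct and follows exactly the paper's own route: substitute $u=y^2/x^2$, $v=z^2/x^2$, identify the bracketed expression in \eqref{eq:int1ovsdmu} as $\frac{1}{x^2}f(u,v)$, and then invoke Lemma~\ref{le:haty} (the range $f(\varOmega)=(1,\infty)$, the equivalence \eqref{eq:choicehatz}, and the bijectivity of $\varphi_u$) for existence and uniqueness in both directions. You merely spell out the bookkeeping — the bijection $z\mapsto z^2/x^2$ from $(y,\infty)$ onto $(u,\infty)$ — that the paper leaves implicit.
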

   \begin{proof}
Making the substitutions $u = \frac{y^2}{x^2}$ and $v
= \frac{z^2}{x^2}$, we see that $(u,v)\in \varOmega$
and
   \begin{equation*}
\frac{1}{x^{2}} \left[\frac{1-2\frac{y^{2}}{x^{2}} +
\frac{y^{2}} {x^{2}} \frac{z^{2}}{x^{2}}}
{\frac{y^{2}}{x^{2}}(\frac{z^{2}}{x^{2}}-\frac{y^{2}}{x^{2}})}
\right] = \frac{1}{x^{2}} f(u,v).
   \end{equation*}
Now applying Lemmata \ref{lem:StampfliInts} and
\ref{le:haty} completes the proof.
   \end{proof}
   The expression on the right-hand side of the second
equality in \eqref{eq:int1ovsSQdmu} leads to the
function $g$ which appears in a lemma below. The proof
of this lemma follows from straightforward
computations via Lemma \ref{le:haty}. The details are
left to the reader.
   \begin{lem}  \label{to-to}
Let $\varOmega$ be as in Lemma~ {\em \ref{le:haty}}
and $g$ be a real function on $\varOmega$ given~ by
   \begin{equation*}
g(u,v) = \frac{-1+u \cdot v^2 + u + 2v + u^2 - 4u
\cdot v} {u(v-u)^2}, \quad (u,v)\in \varOmega.
   \end{equation*}
For $r \in (1,\infty)$, let $h_r$ be a real function
on $(1, \infty)$ defined by\/\footnote{\;In view of
Lemma \ref{le:haty}, the definition of $h_{r}$ is
correct.}
   \begin{align*}
h_r(u) = g\big(u, \varphi_u(r)\big), \quad u \in
(1,\infty),
   \end{align*}
where $\varphi_u$ is as in \eqref{fiju}. Then the
following statements hold for each $r\in (1,\infty)$:
   \begin{enumerate}
  \item[(i)] $h_r(u) = \frac{1-2r + r^2 u}{u - 1},  \quad u\in (1,\infty)$,
  \item[(ii)] $\lim_{u \to 1 +}h_r(u) = \infty$,
  \item[(iii)] $\lim_{u \to \infty} h_r(u) = r^2$,
  \item[(iv)] $h_r\big((1,\infty)\big) = (r^2, \infty)$ and
$h_r\colon (1,\infty) \to (r^2, \infty)$ is a bijection.
   \end{enumerate}
   \end{lem}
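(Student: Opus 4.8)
The plan is to reduce everything to the explicit one–variable formula in part (i), from which (ii), (iii) and (iv) are immediate. First I would compute $h_r(u) = g(u,\varphi_u(r))$ directly. By Lemma~\ref{le:haty}, $v = \varphi_u(r)$ satisfies $f(u,v) = r$, i.e. $1 - 2u + uv = r\,u(v-u)$, which rearranges to $uv - ru(v-u) = 2u - 1$, hence $uv(1-r) + r u^2 = 2u-1$ and so $v - u = \frac{(2u-1) - ru^2 + u^2(r-1)\cdot 0}{\dots}$; more simply, from \eqref{fiju} one has $v - u = \varphi_u(r) - u = \frac{1 - 2u + ru^2 - (r-1)u^2}{(r-1)u} = \frac{1 - 2u + u^2}{(r-1)u} = \frac{(u-1)^2}{(r-1)u}$. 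This last identity is the key simplification: $v - u = \frac{(u-1)^2}{(r-1)u}$.

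Next I would substitute into $g$. Writing the numerator of $g(u,v)$ as $N(u,v) = -1 + uv^2 + u + 2v + u^2 - 4uv$, I would group it so as to exploit $f(u,v) = r$. Note $N(u,v) = u v^2 - 4uv + 2v + (u^2 + u - 1)$; using $uv = r(v-u) + 2u - 1$ (the defining relation rearranged) one can replace the quadratic-in-$v$ term $uv^2 = v\cdot(uv) = v(r(v-u)+2u-1)$, and after collecting terms everything should reduce to an expression linear in $v$, which then combines with $v - u = \frac{(u-1)^2}{(r-1)u}$ to give $g(u,v) = \dfrac{N(u,v)}{u(v-u)^2}$. Carrying out this (admittedly the one genuinely computational) step, I expect the denominator $u(v-u)^2 = u\cdot\frac{(u-1)^4}{(r-1)^2 u^2} = \frac{(u-1)^4}{(r-1)^2 u}$ and the numerator to collapse to $\frac{(u-1)^3(1 - 2r + r^2 u)}{(r-1)^2 u}$, yielding $h_r(u) = \frac{1 - 2r + r^2 u}{u - 1}$, which is part (i). The main obstacle is purely bookkeeping: verifying that all the spurious terms in $N(u,v)$ cancel so cleanly; I would organize it by writing $1 - 2r + r^2 u = r^2(u-1) + (r-1)^2$ to cross-check the algebra.

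Granting (i), the rest is routine. For (ii), as $u \to 1^+$ the numerator $1 - 2r + r^2 u \to (r-1)^2 > 0$ while $u - 1 \to 0^+$, so $h_r(u) \to \infty$. For (iii), write $h_r(u) = \frac{r^2 u + (1-2r)}{u-1} = r^2 + \frac{r^2 + 1 - 2r}{u - 1} = r^2 + \frac{(r-1)^2}{u-1}$; letting $u \to \infty$ gives $h_r(u) \to r^2$. For (iv), the displayed identity $h_r(u) = r^2 + \frac{(r-1)^2}{u-1}$ exhibits $h_r$ as a strictly decreasing continuous bijection from $(1,\infty)$ onto $(r^2, \infty)$, since $\frac{(r-1)^2}{u-1}$ decreases continuously from $\infty$ to $0$ as $u$ runs over $(1,\infty)$ (here $r > 1$ guarantees $(r-1)^2 > 0$). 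This completes all four parts.
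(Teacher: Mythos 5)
Your proposal is correct and follows exactly the route the paper intends: the paper states only that the lemma ``follows from straightforward computations via Lemma~\ref{le:haty}'' and leaves the details to the reader, and your computation supplies those details. The key identities $v-u=\frac{(u-1)^2}{(r-1)u}$ and $h_r(u)=r^2+\frac{(r-1)^2}{u-1}$ are right (the numerator does collapse as you predict), and parts (ii)--(iv) then follow immediately as you describe.
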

Putting together the last three lemmas, we obtain the
following crucial lemma.
   \begin{lem}  \label{prop:formsoftheints}
For any $(x,y,z)\in \rbb^3$ such that $0 < x < y < z$,
there exist $r, \vartheta \in (1,\infty)$ such that
   \begin{align} \label{Rzym-1}
\int_{\rbb_+} \frac{1}{s}d\xi_{x,y,z}(s) & = r
\frac{1}{x^2},
   \\ \label{Rzym-2}
\int_{\rbb_+} \frac{1}{s^2}d\xi_{x,y,z}(s)& =
\vartheta \, r^2 \frac{1}{x^4}.
   \end{align}
Moreover, for any $x\in (0,\infty)$ and any $r, \vartheta
\in (1,\infty)$, there exists $(y,z)\in \rbb^2$ with $x < y
< z$ satisfying \eqref{Rzym-1} and \eqref{Rzym-2}.
   \end{lem}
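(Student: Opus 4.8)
The plan is to assemble Lemma~\ref{prop:formsoftheints} directly from the three preceding lemmas, since each of the two equalities has already essentially been isolated as a named function. First I would prove the forward direction. Given $0 < x < y < z$, set $u = \frac{y^2}{x^2}$ and $v = \frac{z^2}{x^2}$, so that $(u,v)\in\varOmega$. By Lemma~\ref{lem:StampfliInts} and the rewriting used in the proof of Corollary~\ref{sobot-a}, the first negative moment equals $\frac{1}{x^2}f(u,v)$; by Lemma~\ref{le:haty} we have $f(u,v) =: r \in (1,\infty)$, which gives \eqref{Rzym-1}. For the second negative moment, the second equality in \eqref{eq:int1ovsSQdmu} shows it equals $\frac{1}{x^4}g(u,v)$. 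Since $f(u,v) = r$, Lemma~\ref{le:haty} (specifically \eqref{eq:choicehatz}) forces $v = \varphi_u(r)$, so $g(u,v) = g(u,\varphi_u(r)) = h_r(u)$. By Lemma~\ref{to-to}(iv), $h_r(u) \in (r^2,\infty)$, so we may write $h_r(u) = \vartheta r^2$ with $\vartheta \in (1,\infty)$, yielding \eqref{Rzym-2}. That finishes the forward direction.

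For the converse, I would reverse the chain of bijections. Fix $x \in (0,\infty)$ and $r,\vartheta \in (1,\infty)$. Since $\vartheta r^2 \in (r^2,\infty)$ and $h_r\colon(1,\infty)\to(r^2,\infty)$ is a bijection by Lemma~\ref{to-to}(iv), there is a unique $u \in (1,\infty)$ with $h_r(u) = \vartheta r^2$. Then set $v = \varphi_u(r) \in (u,\infty)$, using the bijection of Lemma~\ref{le:haty}; this gives $(u,v)\in\varOmega$, and moreover $f(u,v) = r$ by \eqref{eq:choicehatz}, while $g(u,v) = h_r(u) = \vartheta r^2$ by the definition of $h_r$. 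Finally, recover $y = x\sqrt{u}$ and $z = x\sqrt{v}$, which satisfy $x < y < z$ because $1 < u < v$. By Lemma~\ref{lem:StampfliInts} applied to this $(x,y,z)$, the two moments are $\frac{1}{x^2}f(u,v) = \frac{r}{x^2}$ and $\frac{1}{x^4}g(u,v) = \frac{\vartheta r^2}{x^4}$, exactly \eqref{Rzym-1} and \eqref{Rzym-2}.

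I do not anticipate a serious obstacle here, since all the analytic work — the explicit forms of $f$, $g$, $h_r$, the bijectivity of $\varphi_u$ and of $h_r$, and the conversion of the Stampfli integrals into $f$ and $g$ — has been done in Lemmata~\ref{lem:StampfliInts}, \ref{le:haty} and \ref{to-to}. The only point requiring a little care is bookkeeping: making sure that the $u$ chosen in the converse is the \emph{same} $u$ that feeds into both $\varphi_u$ and $h_r$, so that the two moment conditions are satisfied simultaneously rather than by two unrelated choices. This is exactly why the problem is posed with the nested parametrization $v = \varphi_u(r)$ and $h_r(u) = g(u,\varphi_u(r))$: the single parameter $u$ is pinned down by the second moment, and then $v$ by the first, with no conflict. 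I would also note in passing (as the paper's surrounding discussion suggests) that the roles of $y,z$ here are the prospective weights $\hat\lambda_{i,3}, \hat\lambda_{i,4}$ of the completion, so the parameters $r,\vartheta$ are precisely the two ``parameter sequences'' referred to in the introduction.
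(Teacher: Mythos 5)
Your proposal is correct and follows essentially the same route as the paper's own proof: both directions use the substitution $u=\frac{y^2}{x^2}$, $v=\frac{z^2}{x^2}$, the identification of the two moments with $\frac{1}{x^2}f(u,v)$ and $\frac{1}{x^4}g(u,v)$ from Lemma~\ref{lem:StampfliInts}, the relation $v=\varphi_u(r)$ from \eqref{eq:choicehatz}, and the bijectivity of $h_r$ from Lemma~\ref{to-to}(iv) to produce (and, conversely, to realize) $\vartheta$. Your remark about pinning down a single $u$ consistently for both moment conditions is exactly the point the paper's converse argument handles by first solving $h_r(u)=\vartheta r^2$ and only then setting $v=\varphi_u(r)$.
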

   \begin{proof}
Assume $(x,y,z)\in \rbb^3$ is such that $0 < x < y <
z$. Set $u=\frac{y^2}{x^2}$ and $v=\frac{z^2}{x^2}$.
Then $(u,v)\in \varOmega$. By Lemma \ref{le:haty},
$r:=f(u,v) \in (1,\infty)$ and $v=\varphi_u(r)$, so
   \begin{align*}
\int_{\rbb_+} \frac{1}{s}
d\xi_{x,y,z}(s)\overset{\eqref{eq:int1ovsdmu}}=\frac{1}{x^2}
f(u,v) = r \frac{1}{x^2},
   \end{align*}
which gives \eqref{Rzym-1}. In turn, by Lemma~
\ref{to-to}(iv), $h_r(u) \in (r^2,\infty)$ and
   \begin{align*}
\int_{\rbb_+} \frac{1}{s^2}d\xi_{x,y,z}(s)
\overset{\eqref{eq:int1ovsSQdmu}} = \frac{1}{x^4}
g(u,v) = \frac{1}{x^4} h_r(u),
   \end{align*}
which implies that \eqref{Rzym-2} holds for some
$\vartheta \in (1,\infty)$.

Suppose now that $x\in (0,\infty)$ and $r, \vartheta
\in (1,\infty)$. Since $\vartheta \, r^2 \in
(r^2,\infty)$, we infer from Lemma \ref{to-to}(iv)
that there exists $u\in (1,\infty)$ such that
$h_r(u)=\vartheta \, r^2$. Set $v=\varphi_u(r)$. Then
$(u, v) \in \varOmega$ and
   \begin{align} \label{niedz-1}
g\big(u, v\big) = h_r(u)=\vartheta \, r^2.
   \end{align}
Setting $y=x \sqrt{u}$ and $z=x \sqrt{v}$, we see that
$0<x < y < z$ and
   \begin{align*}
\int_{\rbb_+} \frac{1}{s^2}d\xi_{x,y,z}(s)
\overset{\eqref{eq:int1ovsSQdmu}}=\frac{1}{x^4}
g\big(u, v\big) \overset{ \eqref{niedz-1}}= \vartheta
\, r^2 \frac{1}{x^4},
   \end{align*}
which gives \eqref{Rzym-2}. Since $v=\varphi_u(r)$, we
have
   \begin{align*}
\int_{\rbb_+} \frac{1}{s} d\xi_{x,y,z}(s)
\overset{\eqref{eq:int1ovsdmu}}= \frac{1}{x^2} f(u,v)
= \frac{1}{x^2} f(u,\varphi_u(r))
\overset{\eqref{eq:choicehatz}} = r \frac{1}{x^2},
   \end{align*}
which implies \eqref{Rzym-1}. This completes the
proof.
   \end{proof}
   \section{\label{Sect4}The \mbox{$2$}-generation SCP on
\mbox{$\tcal_{\eta,1}$}}
   We begin by solving the $2$-generation subnormal
completion problem on $\tcal_{\eta,1}$ with $2$-atomic
measures (the case $\eta=\infty$ is included).
   \begin{thm} \label{charoftwoatomic-infty}
Suppose $\eta \in \nbbc$, $\kappa = 1$, $p=2$ and $\lambdab
= \{\lambda_0\} \cup \{\lambda_{i,1}\}_{i=1}^{\eta} \cup
\{\lambda_{i,2}\}_{i=1}^{\eta} \subseteq (0,\infty)$ are
given. Then the following statements are equivalent{\em :}
   \begin{enumerate}
   \item[(i)] $\lambdab$ has a subnormal completion $\slamh$ on
$\tcal_{\eta, 1}$ such that each measure $\mu_{i,
1}^{\lambdabh}$ is $2$-atomic,
   \item[(ii)] there exist sequences $\{r_i\}_{i=1}^{\eta},
\{\vartheta_i\}_{i=1}^{\eta} \subseteq (1,\infty)$ such~
that
   \begin{gather} \label{sum-00-n-infty}
\sup_{i\in J_{\eta}} \frac{\vartheta_i r_i -1}{(\vartheta_i
- 1)r_i} \lambda_{i,2}^2 < \infty,
   \\ \label{sum-0-n-infty}
\sum_{i=1}^{\eta} r_i \frac{\lambda_{i,1}^2}
{\lambda_{i,2}^2} = 1,
   \\ \label{sum-01-n-infty}
\sum_{i=1}^{\eta} \vartheta_i r_i^2 \frac{\lambda_{i,1}^2}
{\lambda_{i,2}^4} \Le \frac{1}{\lambda_0^2}.
   \end{gather}
   \end{enumerate}
Moreover, if {\em (i)} holds, then $\mu_{i,
1}^{\lambdabh}(\{0\})=0$ for all $i\in J_{\eta}$~and
   \begin{gather} \label{kiki-1-infty}
\lambda_0^2 < \sum_{i=1}^{\eta} \lambda_{i,1}^2, \quad
\sum_{i=1}^{\eta} \frac{\lambda_{i,1}^2} {\lambda_{i,2}^4}
< \frac{1}{\lambda_0^2}, \quad \sum_{i=1}^{\eta}
\frac{\lambda_{i,1}^2} {\lambda_{i,2}^2} < 1, \quad
\sum_{i=1}^{\eta} \lambda_{i,1}^2 < \sup_{i\in J_{\eta}}
\lambda_{i,2}^2.
   \end{gather}
   \end{thm}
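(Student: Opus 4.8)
The plan is to combine Lemma~\ref{nproc} (with $\kappa=1$, $p=2$) with the normalization afforded by Lemma~\ref{prop:formsoftheints}, which is precisely the bookkeeping device converting the ``measure'' conditions \eqref{q-mom1}--\eqref{q-mom4} into the ``parameter'' conditions \eqref{sum-00-n-infty}--\eqref{sum-01-n-infty}. First I would set up the bridge: by Theorem~\ref{finite-su}, if $\lambdab$ has a subnormal completion on $\tcal_{\eta,1}$ with $2$-atomic measures then each $\mu_{i,1}^{\lambdabh}$ may be taken to be $1$- or $2$-atomic, hence is the Berger measure $\xi_{x_i,y_i,z_i}$ associated with the Stampfli completion of $(x_i,y_i,z_i)$ for suitable $0<x_i\le y_i\le z_i$; here the first moment condition \eqref{q-mom1} for $n=1$ forces $x_i^2=\lambda_{i,2}^2$ (so $x_i=\lambda_{i,2}$), because $\int s\,d\mu_i(s)=\lambda_{i,2}^2$ and the Stampfli construction has $\gamma_1=x^2$. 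This identifies the free data at vertex $i$ with the pair $(y_i,z_i)$, equivalently (via Lemma~\ref{le:haty} and Corollary~\ref{sobot-a}) with the pair $(r_i,\vartheta_i)\in(1,\infty)^2$.

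\medskip

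Next I would translate each remaining condition of Lemma~\ref{nproc}(ii). The boundedness condition \eqref{q-mom4}, namely $\sup_i\sup\supp\mu_i<\infty$, becomes a bound on $\sup_i z_i^2$; since $z_i^2 = x_i^2 v_i = \lambda_{i,2}^2\,\varphi_{u_i}(r_i)$ and one computes (from $v=\varphi_u(r)=\frac{1-2u+ru^2}{(r-1)u}$ with $u=\frac{y^2}{x^2}$ and $h_r(u)=\vartheta r^2$, so $u=\frac{\vartheta r^2-1+2r}{r^2}$, giving $v=\frac{\vartheta r-1}{(\vartheta-1)r}$) that $z_i^2/x_i^2 = \frac{\vartheta_i r_i -1}{(\vartheta_i-1)r_i}$, condition \eqref{q-mom4} is exactly \eqref{sum-00-n-infty}. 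The moment condition \eqref{q-mom2}, here $\sum_i\lambda_{i,1}^2\int\frac1s\,d\mu_i=1$, becomes $\sum_i\lambda_{i,1}^2\cdot r_i\frac{1}{x_i^2}=\sum_i r_i\frac{\lambda_{i,1}^2}{\lambda_{i,2}^2}=1$ by \eqref{Rzym-1}, which is \eqref{sum-0-n-infty}. Finally, since $\kappa=1$, the only instance of \eqref{q-mom3} is $\sum_i\lambda_{i,1}^2\int\frac1{s^2}\,d\mu_i\le\frac1{\lambda_0^2}$, which by \eqref{Rzym-2} is $\sum_i\vartheta_i r_i^2\frac{\lambda_{i,1}^2}{\lambda_{i,2}^4}\le\frac1{\lambda_0^2}$, i.e.\ \eqref{sum-01-n-infty}; note \eqref{q-mom1} for $n=1$ has already been absorbed into the choice $x_i=\lambda_{i,2}$ and there are no further constraints from \eqref{q-mom1} since $p-1=1$. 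For the forward implication (i)$\Rightarrow$(ii) one reads these equivalences from left to right using the ``direct'' halves of Lemmata~\ref{lem:StampfliInts}, \ref{le:haty}, \ref{to-to} and \ref{prop:formsoftheints}; for (ii)$\Rightarrow$(i), given $\{r_i\},\{\vartheta_i\}$ one uses the ``converse'' half of Lemma~\ref{prop:formsoftheints} to produce $(y_i,z_i)$ with $x_i=\lambda_{i,2}$, sets $\mu_i:=\xi_{\lambda_{i,2},y_i,z_i}$, checks \eqref{q-mom1}--\eqref{q-mom4} via the same dictionary, and invokes the last sentence of Lemma~\ref{nproc} to get the completion $\slamh$ with $\mu_{i,1}^{\lambdabh}=\mu_i$, each $2$-atomic (and with no atom at $0$, since all of $x_i,y_i,z_i$ are strictly positive when $x_i<y_i<z_i$, which is guaranteed because $r_i,\vartheta_i>1$ strictly). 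This also yields $\mu_{i,1}^{\lambdabh}(\{0\})=0$.

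\medskip

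For the ``moreover'' inequalities \eqref{kiki-1-infty}, assume (i)/(ii) and exploit that $r_i>1$ and $\vartheta_i>1$ \emph{strictly}. From \eqref{sum-0-n-infty}, $\sum_i\frac{\lambda_{i,1}^2}{\lambda_{i,2}^2}=\sum_i\frac{1}{r_i}\cdot r_i\frac{\lambda_{i,1}^2}{\lambda_{i,2}^2}<\sum_i r_i\frac{\lambda_{i,1}^2}{\lambda_{i,2}^2}=1$ (strict because each $\frac1{r_i}<1$ and the terms are positive), giving the third inequality. From \eqref{sum-01-n-infty}, $\sum_i\frac{\lambda_{i,1}^2}{\lambda_{i,2}^4}<\sum_i\vartheta_i r_i^2\frac{\lambda_{i,1}^2}{\lambda_{i,2}^4}\le\frac1{\lambda_0^2}$ (strict since $\vartheta_i r_i^2>1$), giving the second. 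For the first and fourth: combine the two sums. Since $r_i\frac{\lambda_{i,1}^2}{\lambda_{i,2}^2}$ sums to $1$ and each $z_i^2/x_i^2=\frac{\vartheta_i r_i-1}{(\vartheta_i-1)r_i}>1$, we have $\sup_i\lambda_{i,2}^2 > \sup_i x_i^2 \ge$ a weighted combination that dominates $\sum_i \lambda_{i,1}^2$; more precisely, $\sum_i\lambda_{i,1}^2 = \sum_i \bigl(r_i\frac{\lambda_{i,1}^2}{\lambda_{i,2}^2}\bigr)\cdot\frac{\lambda_{i,2}^2}{r_i} < \bigl(\sup_i\frac{\lambda_{i,2}^2}{r_i}\bigr)\sum_i r_i\frac{\lambda_{i,1}^2}{\lambda_{i,2}^2} \le \sup_i\lambda_{i,2}^2$, giving the fourth; and $\lambda_0^2 \le \Bigl(\sum_i\vartheta_i r_i^2\frac{\lambda_{i,1}^2}{\lambda_{i,2}^4}\Bigr)^{-1} < \Bigl(\sum_i\frac{\lambda_{i,1}^2}{\lambda_{i,2}^4}\Bigr)^{-1}$, and then one bounds $\sum_i\frac{\lambda_{i,1}^2}{\lambda_{i,2}^4}$ below using the fourth inequality (or a Cauchy--Schwarz-type estimate against $\sum_i\lambda_{i,1}^2/\lambda_{i,2}^2<1$) to extract $\lambda_0^2<\sum_i\lambda_{i,1}^2$; I would chase the exact constant at write-up time. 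The main obstacle is the combinatorial care in the case $\eta=\infty$: one must ensure all series converge (which \eqref{sum-00-n-infty} underwrites, since it bounds $\lambda_{i,2}^2$ times a factor bounded below, hence bounds $z_i^2$, hence $\|S^n e_{(i,1)}\|$ uniformly, which is what makes $\slamh$ bounded) and that the strict inequalities survive passing to suprema/infinite sums; the algebraic identities ($z^2/x^2$ in terms of $(r,\vartheta)$, and the two negative-moment formulas) are routine given the preparatory lemmas.
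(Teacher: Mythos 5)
Your overall architecture coincides with the paper's: identify each $\mu_{i,1}^{\lambdabh}$ with a Stampfli--Berger measure $\xi_{\lambda_{i,2},y_i,z_i}$, use Lemma~\ref{prop:formsoftheints} as a dictionary between $(y_i,z_i)$ and $(r_i,\vartheta_i)$, and translate the conditions of Lemma~\ref{nproc} term by term. However, there are two genuine gaps. The first concerns your treatment of \eqref{q-mom4}, which is the entire content of \eqref{sum-00-n-infty} and the only place where the case $\eta=\infty$ differs from $\eta<\infty$. You claim that \eqref{q-mom4} amounts to a bound on $\sup_i z_i^2$ and that $z_i^2/x_i^2=\frac{\vartheta_i r_i-1}{(\vartheta_i-1)r_i}$. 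Both assertions are incorrect. The quantity $\frac{\vartheta_i r_i-1}{(\vartheta_i-1)r_i}\,x_i^2$ equals $\psi_{1,i}:=y_i^2\frac{z_i^2-x_i^2}{y_i^2-x_i^2}$, not $z_i^2$ (indeed $\psi_{1,i}>z_i^2$ always); your intermediate formula $u=\frac{\vartheta r^2-1+2r}{r^2}$ is also wrong (the correct one is $u=\frac{1-2r+\vartheta r^2}{(\vartheta-1)r^2}$). More seriously, even with the correct value of $z_i^2$, bounding $\sup_i z_i^2$ does \emph{not} bound $\sup_i\sup\supp\mu_i$: by the Curto--Fialkow formula the larger atom is $s_{1,i}=\frac12\bigl(\psi_{1,i}+\sqrt{\psi_{1,i}^2+4\psi_{0,i}}\bigr)$, which satisfies $\frac12\psi_{1,i}\Le s_{1,i}\Le\psi_{1,i}$ and can blow up as $y_i\downarrow x_i$ while $z_i$ stays bounded. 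The correct equivalence is $\sup_i\sup\supp\mu_i<\infty\iff\sup_i\psi_{1,i}<\infty$, and it is $\psi_{1,i}$, not $z_i^2$, that one must express in terms of $(r_i,\vartheta_i)$. You arrive at the right formula \eqref{sum-00-n-infty} only because the algebra error and the misidentification cancel.

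The second gap is the first inequality of \eqref{kiki-1-infty}, $\lambda_0^2<\sum_{i=1}^\eta\lambda_{i,1}^2$, which you explicitly leave open (``I would chase the exact constant at write-up time''). Neither route you sketch closes it: the fourth inequality gives no lower bound on $\sum_i\frac{\lambda_{i,1}^2}{\lambda_{i,2}^4}$ of the required form, and Cauchy--Schwarz against $\sum_i\frac{\lambda_{i,1}^2}{\lambda_{i,2}^2}<1$ yields $\bigl(\sum_i\frac{\lambda_{i,1}^2}{\lambda_{i,2}^2}\bigr)^2\Le\sum_i\frac{\lambda_{i,1}^2}{\lambda_{i,2}^4}\cdot\sum_i\lambda_{i,1}^2$ with the left-hand side \emph{less} than $1$, i.e.\ the wrong direction. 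The paper instead quotes two external facts: $\lambda_0^2\Le\sum_i\lambda_{i,1}^2$ always holds for a subnormal completion, and equality would force every $\mu_{i,1}^{\lambdabh}$ to be a point mass, contradicting $2$-atomicity. (A self-contained fix along your lines does exist: write $1=\sum_i r_i\frac{\lambda_{i,1}^2}{\lambda_{i,2}^2}=\sum_i\bigl(\sqrt{\vartheta_i}\,r_i\frac{\lambda_{i,1}}{\lambda_{i,2}^2}\bigr)\bigl(\frac{\lambda_{i,1}}{\sqrt{\vartheta_i}}\bigr)$ and apply Cauchy--Schwarz together with \eqref{sum-01-n-infty} and $\vartheta_i>1$ to get $1<\lambda_0^{-2}\sum_i\lambda_{i,1}^2$ --- but this is not the estimate you proposed.) The remaining items (the translation of \eqref{q-mom1}--\eqref{q-mom3}, the second, third and fourth inequalities of \eqref{kiki-1-infty}, and the absence of an atom at $0$) are handled correctly and essentially as in the paper.
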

   \begin{proof}
We concentrate on proving the case when $\eta=\infty$.
If $\eta < \infty$, the proof simplifies. In
particular, the statement \eqref{sum-00-n-infty} can
be dropped.

(i)$\Rightarrow$(ii) Let $\slamh$ be a subnormal completion
of $\lambdab$ on $\tcal_{\infty, 1}$ such that each measure
$\mu_i:=\mu_{i, 1}^{\lambdabh}$ is $2$-atomic. It follows
from \cite[Corollary~6.2.2(ii)]{JJS} that
   \allowdisplaybreaks
   \begin{align} \label{sum-1-n-infty}
\sum_{i=1}^{\infty }\lambda _{i,1}^{2}\int_{\rbb_+}
\frac{1}{s}d\mu_{i}(s) & =1,
   \\  \label{sum-2-n-infty}
\sum_{i=1}^{\infty}\lambda _{i,1}^{2}\int_{\rbb_+}
\frac{1}{s^2} d\mu_{i}(s) & \Le \frac{1}{\lambda_0^{2}}.
   \end{align}
Applying \eqref{IlB} to $u=e_{i,1}$ and using the
Berger-Gellar-Wallen theorem (see \cite{GW,Hal}), we deduce
that for every $i\in \nbb$, the unilateral weighted shift
$W_{\alpha^{(i)}}$ with weights $\alpha^{(i)}=\{\hat
\lambda_{i,n+2}\}_{n=0}^{\infty}$ is subnormal and $\mu_i$
is the Berger measure of $W_{\alpha^{(i)}}$. It follows
from \eqref{sum-1-n-infty} that $\mu_i$ cannot have an atom
at zero. Hence each $\mu_i$ has two atoms in $(0,\infty)$.
As a consequence, $\lambda_{i,2} < \hat\lambda_{i,3} <
\hat\lambda_{i,4}$ and
$\mu_i=\xi_{\lambda_{i,2},\hat\lambda_{i,3},
\hat\lambda_{i,4}}$ for every $i \in \nbb$ (see
\cite[Lemma~2.3]{EJSY1} and \cite[Example 3.14, Theorem
3.9(iii)]{CuF93}, respectively). Applying Lemma~
\ref{prop:formsoftheints} to $x=x_i:=\lambda_{i,2}$,
$y=y_i:=\hat\lambda_{i,3}$ and $z=z_i:=\hat\lambda_{i,4}$,
we deduce that there exist sequences
$\{r_i\}_{i=1}^{\infty} \subseteq (1,\infty)$ and
$\{\vartheta_i\}_{i=1}^{\infty} \subseteq (1,\infty)$ such
that \allowdisplaybreaks
   \begin{align} \label{sum-3-n-infty}
\int_{\rbb_+} \frac{1}{s}d\mu_{i}(s) & = r_i
\frac{1}{\lambda_{i,2}^2}, \quad i \in \nbb,
   \\ \label{sum-4-n-infty}
\int_{\rbb_+} \frac{1}{s^2}d\mu_i(s)& = \vartheta_i \,
r_i^2 \frac{1}{\lambda_{i,2}^4}, \quad i \in \nbb.
   \end{align}
According to the proof of Lemma~\ref{prop:formsoftheints},
the sequences $\{r_i\}_{i=1}^{\infty}$ and
$\{\vartheta_i\}_{i=1}^{\infty}$ are constructed via the
following process:
   \begin{gather}  \label{circum-1}
   \begin{gathered}
\text{$y_i=x_i\sqrt{u_i}$, $z_i=x_i\sqrt{v_i}$,
$v_i=\varphi_{u_i}(r_i)$ and $h_{r_i}(u_i)=\vartheta_i
r_i^2$ for $i\in \nbb$,}
   \\
\text{where $0<x_i<y_i<z_i$ and $1 < u_i < v_i$.}
   \end{gathered}
   \end{gather}
(Recall that by Lemmata~\ref{le:haty} and \ref{to-to}, the
functions $\varphi_{u_i}\colon (1,\infty) \to (u_i,
\infty)$ and $h_{r_i}\colon (1,\infty) \to (r_i^2, \infty)$
are bijections.) Combining \eqref{sum-1-n-infty} with
\eqref{sum-3-n-infty}, we obtain \eqref{sum-0-n-infty}. In
turn, \eqref{sum-2-n-infty} and \eqref{sum-4-n-infty} imply
\eqref{sum-01-n-infty}. To get (ii), it remains to prove
\eqref{sum-00-n-infty}.

For this, we show that under the circumstances of
\eqref{circum-1} the following assertion holds:
   \begin{align} \label{com-4}
\varXi:=\sup_{i\in \nbb}\sup\supp \xi_{x_i,y_i,z_i} <
\infty \iff \sup_{i\in \nbb} x_i^2\frac{\vartheta_i r_i
-1}{(\vartheta_i-1)r_i} < \infty.
   \end{align}
Indeed, according to \cite[Example 3.14]{CuF93}, we have
   \begin{align} \label{Vangel1}
\sup\supp \xi_{x_i,y_i,z_i} = \frac{1}{2} \bigg(\psi_{1,i}
+ \sqrt{\psi_{1,i}^2 + 4\psi_{0,i}}\bigg), \quad i\in \nbb,
   \end{align}
where $\psi_{0,i}= - x_i^2y_i^2
\frac{z_i^2-y_i^2}{y_i^2-x_i^2}$ for $i\in \nbb$ and
   \begin{align} \label{comb-0}
\psi_{1,i}= y_i^2 \frac{z_i^2-x_i^2}{y_i^2-x_i^2}
\overset{\eqref{circum-1}}= x_i^2 u_i \frac{v_i-1}{u_i-1},
\quad i\in \nbb.
   \end{align}
Since $\psi_{0,i} < 0$ and $\psi_{1,i}^2 + 4\psi_{0,i}>0$
for every $i\in \nbb$, we deduce from \eqref{Vangel1} that
   \begin{align} \label{comb-3}
\varXi < \infty \iff \sup_{i\in \nbb} \psi_{1,i} < \infty.
   \end{align}
By \eqref{circum-1}, $h_{r_i}(u_i)=\vartheta_i r_i^2$ for
all $i\in \nbb$, so Lemma~ \ref{to-to}(i) leads to
   \begin{align} \label{comb-1}
u_i = \frac{1-2r_i+\vartheta_i
r_i^2}{(\vartheta_i-1)r_i^2}, \quad i\in \nbb.
   \end{align}
Using the identity $v_i=\varphi_{u_i}(r_i)$ and
\eqref{fiju}, we get
   \begin{align} \label{comb-2}
v_i-1 = \frac{(u_i-1)(r_i u_i -1)}{(r_i-1)u_i}, \quad i \in
\nbb.
   \end{align}
Combining \eqref{comb-0}, \eqref{comb-1} and
\eqref{comb-2}, we see that
   \begin{align*}
\psi_{1,i} = \frac{x_i^2}{r_i-1} (r_iu_i-1) =
x_i^2\frac{\vartheta_i r_i -1}{(\vartheta_i-1)r_i}, \quad
i\in \nbb.
   \end{align*}
This together with \eqref{comb-3} yields \eqref{com-4}. It
follows from \cite[Eg.\ (6.3.9)]{JJS} that $\varXi <
\infty$. Hence, by \eqref{com-4}, \eqref{sum-00-n-infty} is
satisfied, which shows that (ii) holds.

(ii)$\Rightarrow$(i) Suppose now that (ii) holds.
Applying the ``moreover'' part of Lemma~
\ref{prop:formsoftheints} to $x=x_i:=\lambda_{i,2}$,
$r=r_i$ and $\vartheta=\vartheta_i$, we deduce that
for every $i\in \nbb$, there exists $(y_i,z_i) \in
\rbb^2$ with $x_i < y_i < z_i$ such that
\eqref{sum-3-n-infty} and \eqref{sum-4-n-infty} hold
with $\mu_i= \xi_{x_i, y_i, z_i}$. According to the
proof of Lemma~\ref{prop:formsoftheints}, the
sequences $\{y_i\}_{i=1}^{\infty}$ and
$\{z_i\}_{i=1}^{\infty}$ are constructed via the
procedure \eqref{circum-1}. It follows from
\eqref{sum-00-n-infty} and \eqref{com-4} that
   \begin{align} \label{mom-1-f-infty}
\sup_{i\in \nbb}\sup\supp \mu_i < \infty.
   \end{align}
Since in general $\int_{\rbb_+} s d\xi_{x,y,z}(s) = x^2$
whenever $0 < x < y < z$, we get
   \begin{align} \label{mom-1-c-infty}
\int_{\rbb_+} s d\mu_i(s) = \lambda_{i,2}^2, \quad i \in
\nbb.
   \end{align}
It follows that \allowdisplaybreaks
   \begin{gather} \label{mom-1-d-infty}
\sum_{i=1}^{\infty} \lambda_{i,1}^{2} \int_{\rbb_+}
\frac{1}{s} d\mu_{i}(s) \overset{\eqref{sum-3-n-infty}}=
\sum_{i=1}^{\infty} r_i \frac{\lambda_{i,1}^{2}}
{\lambda_{i,2}^2} \overset{\eqref{sum-0-n-infty}}= 1,
   \\ \label{mom-1-e-infty}
\sum_{i=1}^{\infty} \lambda_{i,1}^{2} \int_{\rbb_+}
\frac{1}{s^{2}} d\mu_{i}(s)
\overset{\eqref{sum-4-n-infty}}= \sum_{i=1}^{\infty}
\vartheta _i r_i^2 \frac{\lambda_{i,1}^{2}}
{\lambda_{i,2}^4} \overset{\eqref{sum-01-n-infty}} \Le
\frac{1}{\lambda_{0}^{2}}.
   \end{gather}
Putting together the conditions \eqref{mom-1-f-infty},
\eqref{mom-1-c-infty}, \eqref{mom-1-d-infty} and
\eqref{mom-1-e-infty}, and applying Lemma~ \ref{nproc}, we
conclude that $\lambdab$ has a subnormal completion
$\slamh$ on $\tcal_{\infty,1}$ such that $\mu_{i,
1}^{\lambdabh} = \mu_i=\xi_{x_i, y_i, z_i}$ for every $i\in
\nbb$. As a consequence, each $\mu_{i, 1}^{\lambdabh}$ is
$2$-atomic, which gives (i).

Now we prove the ``moreover'' part. Suppose (i) is
satisfied. The fact that $\mu_{i,
1}^{\lambdabh}(\{0\})=0$ for every $i\in \nbb$ is a
direct consequence of \cite[Theorem~ 3.5(i)]{EJSY1}.
Notice that the second inequality in
\eqref{kiki-1-infty} follows from
\eqref{sum-01-n-infty} while the third and the fourth
can be deduced from \eqref{sum-0-n-infty}. Thus, it
remains to prove the first inequality in
\eqref{kiki-1-infty}. It follows from \cite[Theorem~
4.9(i)]{EJSY1} that $\lambda_0^2\Le
\sum_{i=1}^{\infty} \lambda_{i,1}^2$. Suppose, to the
contrary, that $\lambda_0^2 = \sum_{i=1}^{\infty}
\lambda_{i,1}^2$. Then, by the ``moreover'' part of
\cite[Propositon~ 7.6]{EJSY1}, we see that
$\mu_{i,1}^{\lambdabh} = \delta_{c}$ for every $i\in
\nbb$ with $c=\sum_{i=1}^{\infty} \lambda_{i,1}^2$,
which contradicts our assumption that each
$\mu_{i,1}^{\lambdabh}$ is $2$-atomic. This completes
the proof.
   \end{proof}
   Now we are ready to solve the $2$-generation subnormal
completion problem on $\tcal_{\eta,1}$ for $\eta\in\nbb_2$
without any restrictions on the supports of the resulting
measures~$\mu_{i,1}^{\lambdabh}$.
   \begin{thm} \label{charoftwoatomic-solution}
Suppose $\eta \in \nbb_2$, $\kappa = 1$, $p=2$ and
$\lambdab = \{\lambda_0\} \cup
\{\lambda_{i,1}\}_{i=1}^{\eta} \cup
\{\lambda_{i,2}\}_{i=1}^{\eta} \subseteq (0,\infty)$ are
given. Then the following statements are equivalent{\em :}
   \begin{enumerate}
   \item[(i)] $\lambdab$ has a subnormal completion $\slamh$ on
$\tcal_{\eta, 1}$,
   \item[(ii)] there exists
a sequence $\{r_i\}_{i=1}^{\eta} \subseteq [1,\infty)$ such
that
   \begin{gather} \label{sum-0-n-sk}
\sum_{i=1}^{\eta} r_i \frac{\lambda_{i,1}^2}
{\lambda_{i,2}^2} = 1,
   \\ \label{sum-01-n-sk}
\sum_{i=1}^{\eta} r_i^2 \frac{\lambda_{i,1}^2}
{\lambda_{i,2}^4} \;
   \begin{cases}
\Le \frac{1}{\lambda_0^2} & \text{if $r_i=1$ for every
$i\in J_{\eta}$,}
   \\[2ex]
< \frac{1}{\lambda_0^2} & \text{\text{if $r_i > 1$ for
some $i\in J_{\eta}$}.}
   \end{cases}
   \end{gather}
   \end{enumerate}
Moreover, if $\slamh$ is a subnormal completion of
$\lambdab$ on $\tcal_{\eta, 1}$ such that each measure
$\mu_{i,1}^{\lambdabh}$ is $1$- or $2$-atomic, then
\eqref{sum-0-n-sk} and \eqref{sum-01-n-sk} hold for
some $\{r_i\}_{i=1}^{\eta} \subseteq [1,\infty)$
such~that
   \begin{align} \label{one-two-1}
\text{$r_i=1$ $($resp.\ $r_i> 1$$)$ if and only if
$\mu_{i, 1}^{\lambdabh}$ is $1$-atomic $($resp.\
$2$-atomic$)$.}
   \end{align}
Conversely, if {\em (ii)} holds, then $\lambdab$
admits a subnormal completion $\slamh$ on
$\tcal_{\eta,1}$ which satisfies \eqref{one-two-1}.
   \end{thm}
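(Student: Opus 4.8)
The plan is to reduce Theorem~\ref{charoftwoatomic-solution} to the already-proved Theorem~\ref{charoftwoatomic-infty} by separating the ``atomic type'' of each measure. First I would prove the implication (i)$\Rightarrow$(ii). Given a subnormal completion $\slamh$ on $\tcal_{\eta,1}$, Theorem~\ref{finite-su} (with $\kappa=1$, $p=2$) lets me replace it, without loss of generality, by one in which each $\mu_i := \mu_{i,1}^{\lambdabh}$ is $1$- or $2$-atomic. Split $J_\eta$ into $A = \{i : \mu_i \text{ is }2\text{-atomic}\}$ and $B = \{i : \mu_i \text{ is }1\text{-atomic}\}$. As in the proof of Theorem~\ref{charoftwoatomic-infty}, the Berger--Gellar--Wallen theorem and \cite[Corollary~6.2.2(ii)]{JJS} give \eqref{sum-1-n-infty} and \eqref{sum-2-n-infty} (with sums over $J_\eta$), and \eqref{sum-1-n-infty} forces $\mu_i(\{0\})=0$ for all $i$. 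For $i\in A$ I apply Lemma~\ref{prop:formsoftheints} with $x=\lambda_{i,2}$, $y=\hat\lambda_{i,3}$, $z=\hat\lambda_{i,4}$ to obtain $r_i,\vartheta_i\in(1,\infty)$ satisfying \eqref{sum-3-n-infty}, \eqref{sum-4-n-infty}. For $i\in B$, the measure is $\delta_{c_i}$ with $c_i = \lambda_{i,2}^2$ (since $\int s\,d\mu_i = \lambda_{i,2}^2$, a fact already noted as \eqref{mom-1-c-infty}), so $\int \frac1s d\mu_i = \frac1{\lambda_{i,2}^2}$ and $\int \frac1{s^2} d\mu_i = \frac1{\lambda_{i,2}^4}$; thus setting $r_i=1$ (and, implicitly, $\vartheta_i=1$) these same formulas \eqref{sum-3-n-infty}, \eqref{sum-4-n-infty} hold verbatim. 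Substituting into \eqref{sum-1-n-infty} gives \eqref{sum-0-n-sk}, and substituting into \eqref{sum-2-n-infty} gives $\sum_i r_i^2 \vartheta_i \frac{\lambda_{i,1}^2}{\lambda_{i,2}^4} \Le \frac1{\lambda_0^2}$ where $\vartheta_i=1$ for $i\in B$ and $\vartheta_i>1$ for $i\in A$. If $A\ne\varnothing$ the corresponding term is strictly larger than $r_i^2\frac{\lambda_{i,1}^2}{\lambda_{i,2}^4}$, yielding the strict inequality in \eqref{sum-01-n-sk}; if $A=\varnothing$ all $\vartheta_i=1$ and we get the weak inequality. This proves (ii) together with \eqref{one-two-1}.

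For the converse (ii)$\Rightarrow$(i) with \eqref{one-two-1}, I again split $J_\eta$ according to whether $r_i=1$ or $r_i>1$. For $i$ with $r_i>1$ I want to invoke the ``moreover'' part of Lemma~\ref{prop:formsoftheints}, but that lemma produces a measure for \emph{each} choice of $\vartheta_i\in(1,\infty)$; so I must first distribute the ``slack'' in \eqref{sum-01-n-sk} among these indices. Concretely: when some $r_i>1$ we have $\sum_i r_i^2\frac{\lambda_{i,1}^2}{\lambda_{i,2}^4} < \frac1{\lambda_0^2}$, so I can choose $\vartheta_i\in(1,\infty)$ close enough to $1$ for those $i$ (and set $\vartheta_i=1$, i.e.\ use $\mu_i=\delta_{\lambda_{i,2}^2}$, for the indices with $r_i=1$) so that $\sum_i \vartheta_i r_i^2 \frac{\lambda_{i,1}^2}{\lambda_{i,2}^4} \Le \frac1{\lambda_0^2}$ still holds. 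Then Lemma~\ref{prop:formsoftheints} gives $(y_i,z_i)$ with $\lambda_{i,2}<y_i<z_i$ and $\mu_i = \xi_{\lambda_{i,2},y_i,z_i}$ ($2$-atomic) satisfying \eqref{sum-3-n-infty}, \eqref{sum-4-n-infty}; for the $r_i=1$ indices $\mu_i=\delta_{\lambda_{i,2}^2}$ satisfies them trivially. In the boundary case where $r_i=1$ for all $i$ there is no slack, every $\mu_i = \delta_{\lambda_{i,2}^2}$, and \eqref{sum-0-n-sk}, \eqref{sum-01-n-sk} say exactly the conditions \eqref{q-mom1}--\eqref{q-mom3} of Lemma~\ref{nproc}. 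In all cases one verifies \eqref{q-mom1} (which for $p=2$ is just $\int s\,d\mu_i = \lambda_{i,2}^2$, true by \eqref{mom-1-c-infty} in the $2$-atomic case and by construction in the $1$-atomic case), \eqref{q-mom2} and \eqref{q-mom3} via \eqref{sum-0-n-sk} and the chosen $\vartheta_i$; boundedness of supports \eqref{q-mom4} is automatic since $\eta<\infty$ (finitely many compactly supported measures). Lemma~\ref{nproc} then delivers a subnormal completion $\slamh$ with $\mu_{i,1}^{\lambdabh}=\mu_i$, and by construction $\mu_i$ is $2$-atomic exactly when $r_i>1$, giving \eqref{one-two-1}.

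Finally, the plain equivalence (i)$\Leftrightarrow$(ii) follows formally: (i)$\Rightarrow$(ii) is the first paragraph (the subnormal completion can be taken with $1$- or $2$-atomic measures by Theorem~\ref{finite-su}), and (ii)$\Rightarrow$(i) is contained in the second paragraph. I would also remark that when $\eta<\infty$ the hypothesis \eqref{sum-00-n-infty} of Theorem~\ref{charoftwoatomic-infty} is vacuous, which is why it does not reappear here. The main obstacle is the converse direction: one must carefully arrange the $\vartheta_i$'s so that the strict inequality \eqref{sum-01-n-sk} is exploited to leave room for each Stampfli completion, while simultaneously handling the possibility that some indices are forced to be $1$-atomic (which is exactly the content of \eqref{one-two-1}); the degenerate all-$r_i=1$ case, where no room is available and every measure must be a point mass, has to be treated separately. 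Everything else is bookkeeping built on Lemmata~\ref{nproc}, \ref{prop:formsoftheints} and the structure already laid down in the proof of Theorem~\ref{charoftwoatomic-infty}.
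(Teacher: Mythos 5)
Your proposal is correct and follows essentially the same route as the paper: reduce via Theorem~\ref{finite-su} to completions with $1$- or $2$-atomic measures, partition $J_\eta$ by atomic type, use Lemma~\ref{prop:formsoftheints} (with the slack $\vartheta_i>1$ producing the strict inequality) for the forward direction, and for the converse distribute the slack in \eqref{sum-01-n-sk} over the indices with $r_i>1$ before invoking Lemma~\ref{nproc}. The only cosmetic difference is that the paper uses a single common factor $\tau\in(1,\infty)$ for all $2$-atomic indices rather than individually chosen $\vartheta_i$'s, which changes nothing.
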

   \begin{proof}
In view of Theorem~ \ref{finite-su}, the statement (i)
is equivalent to the fact that $\lambdab$ admits a
subnormal completion $\slamh$ on $\tcal_{\eta,1}$ such
that each measure $\mu_{i, 1}^{\lambdabh}$ is $1$- or
$2$-atomic. Hence, in the proof of the implication
(i)$\Rightarrow$(ii), we can define the partition
$(A,B)$ of $J_{\eta}$ as follows
   \begin{align*}
\text{$A=\big\{i \in J_{\eta}\colon \mu_{i, 1}^{\lambdabh}
\text{ is $1$-atomic}\big\}$ and $B=\big\{i \in
J_{\eta}\colon \mu_{i, 1}^{\lambdabh} \text{ is
$2$-atomic}\big\}$.}
   \end{align*}
By \cite[Corollary~6.2.2(ii)]{JJS} the measures
$\mu_i=\mu_{i, 1}^{\lambdabh}$, $i\in J_{\eta}$,
satisfy \eqref{q-mom1}-\eqref{q-mom3} with $\kappa=1$
and $p=2$. If $B=\varnothing$, then it is easily seen
that $\mu_{i, 1}^{\lambdabh} =
\delta_{\lambda_{i,2}^2}$ for every $i\in J_{\eta}$
and so \eqref{sum-0-n-sk} and \eqref{sum-01-n-sk} hold
with $r_i=1$ for all $i\in J_{\eta}$. If $B\neq
\varnothing$, then by combining reasonings used above
and in the proof of the implication
(i)$\Rightarrow$(ii) of
Theorem~\ref{charoftwoatomic-infty}, we get two
sequences $\{r_i\}_{i\in B}, \{\vartheta_i\}_{i\in B}
\subseteq (1,\infty)$ such that
   \begin{gather} \label{dud-1}
\sum_{i\in A} \frac{\lambda_{i,1}^2} {\lambda_{i,2}^2} +
\sum_{i\in B} r_i \frac{\lambda_{i,1}^2} {\lambda_{i,2}^2}
= 1,
   \\ \label{dud-2}
\sum_{i\in A} \frac{\lambda_{i,1}^2}
{\lambda_{i,2}^4} + \sum_{i\in B} \vartheta_i r_i^2
\frac{\lambda_{i,1}^2} {\lambda_{i,2}^4} \Le
\frac{1}{\lambda_0^2}.
   \end{gather}
Since $\vartheta_i > 1$ for any $i\in B$, we see that
\eqref{dud-1} and \eqref{dud-2} imply \eqref{sum-0-n-sk}
and \eqref{sum-01-n-sk} with $r_i=1$ for $i\in A$. Putting
all of this together yields (ii) and \eqref{one-two-1}.

To prove the converse implication (ii)$\Rightarrow$(i), we
will define a family $\{\mu_i\}_{i\in J_{\eta}}$ of Borel
probability measures on $\rbb_+$ satisfying
\eqref{q-mom1}-\eqref{q-mom4} with $\kappa=1$ and $p=2$.
First, we define the partition $(A',B')$ of $J_{\eta}$ by
   \begin{align*}
\text{$A'=\{i\in J_{\eta}\colon r_i=1\}$ and
$B'=\{i\in J_{\eta}\colon r_i> 1\}$.}
   \end{align*}
If $B' = \varnothing$, then the probability measures
$\mu_i=\delta_{\lambda_{i,2}^2}$, $i\in J_{\eta}$, satisfy
\eqref{q-mom1}-\eqref{q-mom4}. If $B' \neq \varnothing$,
then we proceed as follows. By \eqref{sum-01-n-sk} there
exists $\tau \in (1,\infty)$ such that
   \begin{align}  \label{abdwa}
\sum_{i\in A'} \frac{\lambda_{i,1}^2}
{\lambda_{i,2}^4} + \tau \sum_{i\in B'} r_i^2
\frac{\lambda_{i,1}^2} {\lambda_{i,2}^4} <
\frac{1}{\lambda_0^2}.
   \end{align}
If $i \in A'$, then we set
$\mu_i=\delta_{\lambda_{i,2}^2}$. If $i\in B'$, then
we define $\mu_i$ as in the proof of the implication
(ii)$\Rightarrow$(i) of
Theorem~\ref{charoftwoatomic-infty} with
$\vartheta_i=\tau$. As in that proof, we verify that
   \allowdisplaybreaks
   \begin{align*}
\int_{\rbb_+} s d\mu_i(s) & = \lambda_{i,2}^2, \quad i \in
J_{\eta},
   \\
\sum_{i=1}^{\eta} \lambda_{i,1}^{2} \int_{\rbb_+}
\frac{1}{s} d\mu_{i}(s) & = \sum_{i\in A'}
\frac{\lambda_{i,1}^2} {\lambda_{i,2}^2} + \sum_{i\in
B'} r_i \frac{\lambda_{i,1}^2} {\lambda_{i,2}^2}
\overset{\eqref{sum-0-n-sk}}= 1,
   \\
\sum_{i=1}^{\eta} \lambda_{i,1}^{2} \int_{\rbb_+}
\frac{1}{s^{2}} d\mu_{i}(s) & = \sum_{i\in A'}
\frac{\lambda_{i,1}^2} {\lambda_{i,2}^4} + \sum_{i\in
B'} \vartheta_i r_i^2 \frac{\lambda_{i,1}^{2}}
{\lambda_{i,2}^4} \overset{\eqref{abdwa}} <
\frac{1}{\lambda_{0}^{2}}.
   \end{align*}
Hence \eqref{q-mom1}-\eqref{q-mom4} hold. Now, by applying
Lemma~ \ref{nproc}, we conclude that $\lambdab$ has a
subnormal completion $\slamh$ on $\tcal_{\eta,1}$ such that
$\mu_{i, 1}^{\lambdabh} = \mu_i$ for every $i\in J_{\eta}$,
which gives (i). Clearly, the measures $\{\mu_{i,
1}^{\lambdabh}\colon i\in A'\}$ are $1$-atomic while the
measures $\{\mu_{i, 1}^{\lambdabh}\colon i \in B'\}$ are
$2$-atomic. This completes the proof of the ``moreover''
part and the proof of the theorem.
   \end{proof}
It follows from Theorem~\ref{charoftwoatomic-solution} that
the $2$-generation subnormal completion problem on
$\tcal_{\eta,1}$ may have a solution admitting
simultaneously $1$- and $2$-atomic measures. Below we give
an example showing that the problem on $\tcal_{2,1}$ may
have a solution of this kind with distinct atoms.
   \begin{figure}[ht]
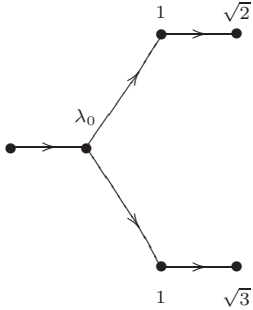

\centerline{ \xy (30,8);(36,8)**@{-} ?>*\dir{>},
(36,8);(40,8)**@{-}, (40,8);(47,18)**@{-} ?>*\dir{>},
(47,18);(50,22.8)**@{-}, (40,8);,
(50,23);(56,23)**@{-}?>*\dir{>},(56,23);(60,23)**@{-},(40,8);(47,-2.66)**@{-}
?>*\dir{>}, (47,-2.66);(50,-8)**@{-}, (50,-8);(56,-8)**@{-}
?>*\dir{>},(56,-8);(60,-8)**@{-},
(40,12)*{_{\lambda_0}},(50,26)*{_{1}},(50,-12)*{_{1}},
(60,26)*{_{\sqrt{2}}},(60,-12)*{_{\sqrt{3}}},(80,20)*{_{
\text{}}},(83,17)*{_{}},(80,12)*{_{ \text{}}},(80,7)*{_{
\text{}}},(80,2)*{_{ \text{}}},
(50,22.8)*{\bullet},(60,23)*{\bullet},
(30,7.7)*{\bullet},(40,7.7)*{\bullet},
(50,-8)*{\bullet},(60,-8)*{\bullet}
\endxy } \vspace*{5pt}
\caption{Mixed completions discussed in
Example~\ref{mix-1}. \label{Fig4}} \centering
\end{figure}
   \begin{exa} \label{mix-1}
Consider the $2$-generation subnormal completion problem on
$\tcal_{2,1}$ with the initial data $\lambdab =
\{\lambda_0\} \cup \{\lambda_{i,1}\}_{i=1}^{2} \cup
\{\lambda_{i,2}\}_{i=1}^{2} \subseteq (0,\infty)$, where
$\lambda_0 \in (0,\infty)$ is arbitrary and the remaining
weights are given by (see Figure \ref{Fig4})
   \begin{align*}
\text{$\lambda_{1,1}=\lambda_{2,1}=1$,
$\lambda_{1,2}=\sqrt{2}$ and $\lambda_{2,2}=\sqrt{3}$.}
   \end{align*}
We are looking for $\lambda_0$ for which $\lambdab$ admits
a subnormal completion $\slamh$ on $\tcal_{2,1}$ such that
$\mu_{1,1}^{\lambdabh}$ is $1$-atomic while
$\mu_{2,1}^{\lambdabh}$ is $2$-atomic, and all these atoms
are different. In view of \cite[Theorem~ 4.9]{EJSY1}, any
subnormal completion $\slamh$ of the above $\lambdab$ on
$\tcal_{2,1}$ comes from compactly supported Borel
probability measures $\mu_1$ and $\mu_2$ on $\rbb_+$ which
satisfy the following conditions
   \begin{gather} \label{dodo1}
\int_{\rbb_+} s d\mu_{i}(s) \overset{}= \lambda_{i,2}^2 =
   \begin{cases}
2 & \text{if } i=1,
   \\
3 & \text{if } i=2,
   \end{cases}
   \\ \label{dodo2}
\int_{\rbb_+} \frac{1}{s}d\mu_{1}(s) + \int_{\rbb_+}
\frac{1}{s}d\mu_{2}(s) =1,
   \\ \label{dodo3}
\int_{\rbb_+} \frac{1}{s^{2}} d\mu_{1}(s) + \int_{\rbb_+}
\frac{1}{s^{2}} d\mu_{2}(s) \Le \frac{1}{\lambda_{0}^{2}}.
   \end{gather}
Let $\mu_1$ and $\mu_2$ be Borel probability measures on
$\rbb_+$ given by
   \begin{align*}
\mu_1=\delta_x \quad \text{and} \quad \mu_2 = \frac 12
\delta_y + \frac 12 \delta_z.
   \end{align*}
It is a matter of routine to verify that $\mu_1$ and
$\mu_2$ with $x=2$, $y=3-\sqrt{3}$ and $z=3+\sqrt{3}$
satisfy \eqref{dodo1} and \eqref{dodo2}. Hence
\eqref{dodo3} holds if and only if $\lambda_0^2 \Le
\frac{12}{7}$, meaning that for such $\lambda_0$'s the
corresponding $\lambdab$ admits a subnormal completion on
$\tcal_{2,1}$ with the desired properties.
   \end{exa}
If $\eta \in \nbb_2$, Theorem~\ref{charoftwoatomic-infty}
takes a much simpler form which is a direct consequence of
Theorem~ \ref{charoftwoatomic-solution}.
   \begin{pro} \label{charoftwoatomic}
Suppose $\eta \in \nbb_2$, $\kappa = 1$, $p=2$ and
$\lambdab = \{\lambda_0\} \cup
\{\lambda_{i,1}\}_{i=1}^{\eta} \cup
\{\lambda_{i,2}\}_{i=1}^{\eta} \subseteq (0,\infty)$
are given. Then the following statements are
equivalent{\em :}
   \begin{enumerate}
   \item[(i)] $\lambdab$ has a subnormal completion $\slamh$ on
$\tcal_{\eta, 1}$ such that each measure $\mu_{i,
1}^{\lambdabh}$ is $2$-atomic,
   \item[(ii)] there exists a sequence
$\{r_i\}_{i=1}^{\eta} \subseteq (1,\infty)$ such that
   \begin{gather} \label{sum-0-n}
\sum_{i=1}^\eta r_i \frac{\lambda_{i,1}^2}
{\lambda_{i,2}^2} = 1,
   \\ \label{sum-01-n}
\sum_{i=1}^\eta r_i^2 \frac{\lambda_{i,1}^2}
{\lambda_{i,2}^4} < \frac{1}{\lambda_0^2},
   \end{gather}
   \item[(iii)] $-\infty < \beta(\eta) < \frac{1}{\lambda_0^2}$,
where\footnote{\;We adhere to the convention that
$\inf \varnothing = -\infty$; in particular, $-\infty
< \beta(\eta)$ means that the set in \eqref{bet-et} is
nonempty.}
   \begin{align} \label{bet-et}
\beta(\eta):=\inf \left\{\sum_{i=1}^\eta r_i^2
\frac{\lambda_{i,1}^2} {\lambda_{i,2}^4} \colon
\{r_i\}_{i=1}^{\eta} \subseteq (1,\infty) \text{ and }
\sum_{i=1}^\eta r_i \frac{\lambda_{i,1}^2}
{\lambda_{i,2}^2} = 1\right\}.
   \end{align}
   \end{enumerate}
   \end{pro}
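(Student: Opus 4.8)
The plan is to prove Proposition~\ref{charoftwoatomic} by establishing the chain of equivalences (i)$\Leftrightarrow$(ii)$\Leftrightarrow$(iii), using Theorem~\ref{charoftwoatomic-solution} and Theorem~\ref{charoftwoatomic-infty} as the main engines. Since $\eta\in\nbb_2$ is finite, condition \eqref{sum-00-n-infty} from Theorem~\ref{charoftwoatomic-infty} is automatically satisfied (a finite supremum over a finite index set), so Theorem~\ref{charoftwoatomic-infty} already gives (i)$\Leftrightarrow$(ii): statement (i) here is verbatim statement (i) there, and the sequences $\{r_i\},\{\vartheta_i\}\subseteq(1,\infty)$ of Theorem~\ref{charoftwoatomic-infty}(ii) satisfy \eqref{sum-0-n-infty} and \eqref{sum-01-n-infty}, which with $\vartheta_i>1$ collapse to \eqref{sum-0-n} and \eqref{sum-01-n}; conversely, given $\{r_i\}$ satisfying \eqref{sum-0-n}--\eqref{sum-01-n} one picks a common $\vartheta_i=\tau>1$ close enough to $1$ to preserve the strict inequality \eqref{sum-01-n}, exactly as in the proof of (ii)$\Rightarrow$(i) of Theorem~\ref{charoftwoatomic-infty}. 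So the first equivalence is essentially a bookkeeping reduction.

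\smallskip
The substance is the equivalence (ii)$\Leftrightarrow$(iii). For (ii)$\Rightarrow$(iii): if some $\{r_i\}_{i=1}^\eta\subseteq(1,\infty)$ satisfies the constraint $\sum_i r_i\lambda_{i,1}^2/\lambda_{i,2}^2=1$, then the feasible set in \eqref{bet-et} is nonempty, so $\beta(\eta)>-\infty$; moreover $\beta(\eta)\Le\sum_i r_i^2\lambda_{i,1}^2/\lambda_{i,2}^4<1/\lambda_0^2$, giving $\beta(\eta)<1/\lambda_0^2$. For (iii)$\Rightarrow$(ii): $\beta(\eta)>-\infty$ means the feasible set is nonempty, and $\beta(\eta)<1/\lambda_0^2$ means we can choose a feasible $\{r_i\}$ with objective value strictly below $1/\lambda_0^2$ (since the infimum is strictly below that bound, such a point exists), which is precisely (ii). I would write this out in a short paragraph; the only mild subtlety is being careful that "$-\infty<\beta(\eta)$" is interpreted (via the stated footnote convention $\inf\varnothing=-\infty$) as nonemptiness of the feasible set, and that the infimum being finite does not require it to be attained — we only ever need a feasible point with objective close enough to the infimum.

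\smallskip
The main obstacle is genuinely just one of exposition rather than mathematics: making sure the reduction from Theorem~\ref{charoftwoatomic-infty} to the finite case is stated cleanly, in particular noting explicitly that \eqref{sum-00-n-infty} is vacuous when $\eta<\infty$ and that the $\{\vartheta_i\}$ can be eliminated in favor of the strict inequality in \eqref{sum-01-n}. There is a small asymmetry to handle: in Theorem~\ref{charoftwoatomic-infty}(ii) the inequality \eqref{sum-01-n-infty} is non-strict ($\Le$), whereas in Proposition~\ref{charoftwoatomic}(ii) it is strict ($<$). This is reconciled by the factor $\vartheta_i>1$: from a solution of \eqref{sum-0-n-infty}--\eqref{sum-01-n-infty} one has $\sum_i r_i^2\lambda_{i,1}^2/\lambda_{i,2}^4<\sum_i\vartheta_i r_i^2\lambda_{i,1}^2/\lambda_{i,2}^4\Le 1/\lambda_0^2$ (strict because $\eta\Ge 2$ and all terms are positive), and conversely a strict inequality in \eqref{sum-01-n} leaves room to insert a common $\vartheta_i=\tau>1$. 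I would close with the observation that (i)$\Leftrightarrow$(ii)$\Leftrightarrow$(iii) as claimed, remarking that (iii) recasts the problem as computing the infimum of a quadratic form subject to a linear constraint, which is the point picked up in Section~\ref{Sect5}.
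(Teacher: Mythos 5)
Your proposal is correct and matches the paper's intent: the paper gives no separate proof, presenting the proposition as an immediate consequence of the preceding theorems (officially Theorem~\ref{charoftwoatomic-solution}, via its ``moreover'' part with all $r_i>1$), and your reduction through Theorem~\ref{charoftwoatomic-infty} --- dropping \eqref{sum-00-n-infty} for finite $\eta$, absorbing the $\vartheta_i>1$ into the strict inequality \eqref{sum-01-n}, and reading (iii) as the trivial infimum reformulation of (ii) --- is the same bookkeeping argument. The details you supply (strictness from $\vartheta_i>1$, reinserting a common $\tau>1$, and the $\inf\varnothing=-\infty$ convention) are all sound.
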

Below we discuss the $2$-generation SCP on $\tcal_{\eta,1}$
with $2$-atomic measures under some constraints. Let us
suppose temporarily that $\eta \in \nbb_2$. According to
the ``moreover'' part of
Theorem~\ref{charoftwoatomic-infty}, if $\slamh$ is a
subnormal completion of $\lambdab$ on $\tcal_{\eta,1}$ such
that each measure $\mu_{i, 1}^{\lambdabh}$ is $2$-atomic,
then there exists $i\in J_{\eta}$ such that
$\sum_{j=1}^\eta \lambda_{j,1}^2 < \lambda_{i,2}^2$ (use
the fourth inequality in \eqref{kiki-1-infty}). If the last
inequality holds for all $i\in J_{\eta}$, then the solution
of the $2$-generation subnormal completion problem on
$\tcal_{\eta,1}$ with $2$-atomic measures takes a simple
form. Namely, the first inequality in \eqref{kiki-1-infty},
which is a necessary condition for solving the
$2$-generation subnormal completion problem on
$\tcal_{\eta,1}$, becomes now sufficient (see Corollary~
\ref{charoftwoatomic-2}(ii) below). It is worth pointing
out that in general the inequality $\lambda_0^2 \Le
\sum_{i=1}^{\eta} \lambda_{i,1}^2$ holds whenever the
$p$-generation subnormal completion problem on
$\tcal_{\eta,\kappa}$ has a solution (see \cite[Theorem~
4.9(i)]{EJSY1}).
   \begin{cor} \label{charoftwoatomic-2}
Suppose $\eta \in \nbb_2$, $\kappa = 1$, $p=2$ and
$\lambdab = \{\lambda_0\} \cup
\{\lambda_{i,1}\}_{i=1}^{\eta} \cup
\{\lambda_{i,2}\}_{i=1}^{\eta} \subseteq (0,\infty)$ are
given. Assume that
   \begin{align} \label{dodo-1}
\sum_{j=1}^\eta \lambda_{j,1}^2 < \lambda_{i,2}^2,
\quad i\in J_{\eta}.
   \end{align}
Then the following statements are equivalent{\em :}
   \begin{enumerate}
   \item[(i)] $\lambdab$ has a subnormal completion $\slamh$ on
$\tcal_{\eta, 1}$ such that each measure $\mu_{i,
1}^{\lambdabh}$ is $2$-atomic,
   \item[(ii)] $\lambda_0^2 < \sum_{i=1}^{\eta}
\lambda_{i,1}^2$.
   \end{enumerate}
   \end{cor}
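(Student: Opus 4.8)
The plan is to derive Corollary~\ref{charoftwoatomic-2} from Theorem~\ref{charoftwoatomic-solution} (or equivalently from Proposition~\ref{charoftwoatomic}), using the extra hypothesis \eqref{dodo-1} to control the quadratic optimization.

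\medskip

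First I would dispose of the easy implication. Assume (i) holds. Then the fourth inequality in \eqref{kiki-1-infty} of Theorem~\ref{charoftwoatomic-infty} gives $\sum_{i=1}^{\eta}\lambda_{i,1}^2 < \sup_{i\in J_{\eta}}\lambda_{i,2}^2$, which is automatic under \eqref{dodo-1}; more to the point, the first inequality in \eqref{kiki-1-infty} directly yields $\lambda_0^2 < \sum_{i=1}^{\eta}\lambda_{i,1}^2$, i.e.\ (ii). So (i)$\Rightarrow$(ii) needs no new work beyond quoting Theorem~\ref{charoftwoatomic-infty}.

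\medskip

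The substance is (ii)$\Rightarrow$(i). By Proposition~\ref{charoftwoatomic}, it suffices to produce a sequence $\{r_i\}_{i=1}^{\eta}\subseteq(1,\infty)$ with $\sum_{i=1}^{\eta} r_i \frac{\lambda_{i,1}^2}{\lambda_{i,2}^2}=1$ and $\sum_{i=1}^{\eta} r_i^2 \frac{\lambda_{i,1}^2}{\lambda_{i,2}^4} < \frac{1}{\lambda_0^2}$. The natural attempt is a \emph{flat} choice: look for a single constant $r\in(1,\infty)$ with $r_i=r$ for all $i$. The first constraint then forces $r = \big(\sum_{i=1}^{\eta}\frac{\lambda_{i,1}^2}{\lambda_{i,2}^2}\big)^{-1}$, and the third inequality in \eqref{kiki-1-infty} (the necessary condition $\sum_{i}\frac{\lambda_{i,1}^2}{\lambda_{i,2}^2}<1$, which I must check holds here) guarantees $r>1$ provided that sum is $<1$. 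But a flat $r$ forces the second quantity to equal $r^2\sum_{i}\frac{\lambda_{i,1}^2}{\lambda_{i,2}^4}$, which need not be $<\frac{1}{\lambda_0^2}$ in general — so a pure flat choice will not do, and this is the main obstacle: one must exploit \eqref{dodo-1} to gain room. The key observation is that \eqref{dodo-1} says each $\frac{\lambda_{i,2}^2}{\lambda_{i,1}^2}$ (hence each admissible ``leverage'' $\lambda_{i,2}^2/\sum_j\lambda_{j,1}^2 > 1$) is large, which lets the infimum $\beta(\eta)$ of \eqref{bet-et} be pushed down toward its unconstrained lower bound. I would compute $\beta(\eta)$ by Lagrange multipliers: minimizing $\sum_i r_i^2 a_i$ subject to $\sum_i r_i b_i = 1$, with $a_i = \frac{\lambda_{i,1}^2}{\lambda_{i,2}^4}$ and $b_i = \frac{\lambda_{i,1}^2}{\lambda_{i,2}^2}$, gives the unconstrained optimum $r_i = \frac{b_i/a_i}{\sum_j b_j^2/a_j}$ with minimal value $\big(\sum_j b_j^2/a_j\big)^{-1} = \big(\sum_j \lambda_{j,2}^2\big)^{-1}$. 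One then checks, using \eqref{dodo-1}, that these optimal $r_i$ all lie in $(1,\infty)$ (here $r_i = \lambda_{i,2}^2/\sum_j\lambda_{j,2}^2 \cdot$ correction terms — the arithmetic must be carried out carefully, but \eqref{dodo-1} is exactly the inequality that makes each $r_i>1$), so the infimum is attained and $\beta(\eta) = \big(\sum_{j}\lambda_{j,2}^2\big)^{-1}$, or at worst $\beta(\eta)$ is approached by interior points. Finally, (ii) together with \eqref{dodo-1} must be shown to force $\beta(\eta) < \frac{1}{\lambda_0^2}$: indeed $\lambda_0^2 < \sum_i\lambda_{i,1}^2 < \lambda_{i,2}^2$ for each $i$, and summing/estimating appropriately yields $\lambda_0^2 < \sum_i \lambda_{i,2}^2$ (or the sharper bound needed), hence $\beta(\eta) = \big(\sum_i\lambda_{i,2}^2\big)^{-1} < \frac{1}{\lambda_0^2}$, and Proposition~\ref{charoftwoatomic}(iii)$\Rightarrow$(i) closes the argument. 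I expect the delicate point to be verifying that the Lagrange-optimal $r_i$ genuinely satisfy $r_i>1$ — this is where hypothesis \eqref{dodo-1} is indispensable, and if the exact optimizer lands on the boundary one instead perturbs it into the interior while keeping the objective below $\frac{1}{\lambda_0^2}$, which is possible precisely because the inequality in (ii) is strict.
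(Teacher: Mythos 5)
Your proposal is essentially the paper's own argument: the paper also deduces (i)$\Rightarrow$(ii) from the first inequality in \eqref{kiki-1-infty}, and for (ii)$\Rightarrow$(i) it simply writes down the Cauchy--Schwarz/Lagrange optimizer $r_i=\lambda_{i,2}^2/\sum_{j}\lambda_{j,1}^2$ (your $r_i=\frac{b_i/a_i}{\sum_j b_j^2/a_j}$ is exactly this), observes that \eqref{dodo-1} is precisely the condition $r_i>1$, and applies Proposition~\ref{charoftwoatomic}. The one slip to fix is arithmetic: with $a_i=\lambda_{i,1}^2/\lambda_{i,2}^4$ and $b_i=\lambda_{i,1}^2/\lambda_{i,2}^2$ one has $b_j^2/a_j=\lambda_{j,1}^2$, so the minimal value is $\bigl(\sum_j\lambda_{j,1}^2\bigr)^{-1}$, not $\bigl(\sum_j\lambda_{j,2}^2\bigr)^{-1}$; this actually simplifies your last step, since $\beta(\eta)\Le\bigl(\sum_j\lambda_{j,1}^2\bigr)^{-1}<\frac{1}{\lambda_0^2}$ is then immediate from (ii) with no further use of \eqref{dodo-1}, and no boundary perturbation is needed because \eqref{dodo-1} already places the optimizer strictly inside $(1,\infty)^{\eta}$.
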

   \begin{proof}
(i)$\Rightarrow$(ii) This is a direct consequence of the
first inequality in \eqref{kiki-1-infty}.

(ii)$\Rightarrow$(i) Define $\{r_i\}_{i=1}^{\eta}
\subseteq (1,\infty)$ by
   \begin{align} \label{choi-1}
r_i= \frac{\lambda_{i,2}^2}{\sum_{j=1}^\eta
\lambda_{j,1}^2}, \quad i\in J_{\eta}.
   \end{align}
It is easily seen that with this choice of
$\{r_i\}_{i=1}^{\eta}$ the equation \eqref{sum-0-n} holds.
Since
   \begin{align*}
\sum_{i=1}^\eta r_i^2 \frac{\lambda_{i,1}^2}
{\lambda_{i,2}^4} \overset{\eqref{choi-1}} =
\frac{1}{\sum_{i=1}^{\eta} \, \lambda_{i,1}^2}
\overset{\mathrm{(ii)}} < \frac{1}{\lambda_0^2},
   \end{align*}
we obtain \eqref{sum-01-n}. Applying
Proposition~\ref{charoftwoatomic} completes the proof.
   \end{proof}
   \begin{rem} \label{tas-1}
In view of Proposition~\ref{charoftwoatomic}(iii), if $\eta
\in \nbb_2$ one may create many sufficient conditions for
solving the $2$-generation subnormal completion problem on
$\tcal_{\eta,1}$ with $2$-atomic measures. The procedure
goes as follows. First, we look for a sequence
$\{r_i\}_{i\in J_{\eta}} \subseteq (1,\infty)$ satisfying
the equation \eqref{sum-0-n}. Second, we substitute this
particular choice of $\{r_i\}_{i\in J_{\eta}}$ into the
expression $\sum_{i=1}^\eta r_i^2 \frac{\lambda_{i,1}^2}
{\lambda_{i,2}^4}$ appearing in \eqref{sum-01-n}. Finally,
we require the value so obtained to be strictly less than~
$\frac{1}{\lambda_0^2}$.
   \end{rem}
It is worth mentioning that the procedure described in
Remark~ \ref{tas-1} has been applied in the proof of
Corollary~\ref{charoftwoatomic-2} (see \eqref{choi-1}).
Below we give a few more examples illustrating this
procedure.
   \begin{cor}
Suppose $\eta \in \nbb_2$, $\kappa = 1$, $p=2$ and
$\lambdab = \{\lambda_0\} \cup
\{\lambda_{i,1}\}_{i=1}^{\eta} \cup
\{\lambda_{i,2}\}_{i=1}^{\eta} \subseteq (0,\infty)$
are given. Then $\lambdab$ has a subnormal completion
$\slamh$ on $\tcal_{\eta, 1}$ such that each measure
$\mu_{i, 1}^{\lambdabh}$ is $2$-atomic provided any of
the following conditions holds{\em :}
   \begin{enumerate}
   \item[(i)] $\sum_{i=1}^\eta \frac{\lambda_{i,1}^2}
{\lambda_{i,2}^2} < 1$ and $\lambda_0^2 \,
\sum_{i=1}^\eta \frac{\lambda_{i,1}^2}
{\lambda_{i,2}^4} < \Big(\sum_{i=1}^\eta
\frac{\lambda_{i,1}^2} {\lambda_{i,2}^2}\Big)^2$,
   \item[(ii)] $\eta \lambda_{i,1}^2 < \lambda_{i,2}^2$
for each $i\in J_{\eta}$ and $\lambda_0^2 \,
\sum_{i=1}^\eta \frac{1} {\lambda_{i,1}^2} < \eta^2$,
   \item[(iii)] $\lambda_{i,1}^2 \sum_{j=1}^{\eta}
\frac{1}{\lambda_{j,2}^2} < 1$ for each $i\in
J_{\eta}$ and $\lambda_0^2 \sum_{i=1}^{\eta}
\frac{1}{\lambda_{i,1}^2\lambda_{i,2}^4} <
\Big(\sum_{i=1}^{\eta}
\frac{1}{\lambda_{i,2}^2}\Big)^2$.
   \end{enumerate}
   \end{cor}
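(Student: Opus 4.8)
The plan is to verify each of (i), (ii), (iii) by applying Proposition~\ref{charoftwoatomic}(ii), following exactly the recipe described in Remark~\ref{tas-1}: for each clause I would exhibit an explicit sequence $\{r_i\}_{i=1}^{\eta}\subseteq(1,\infty)$ which satisfies the normalization \eqref{sum-0-n}, and then check that the value of $\sum_{i=1}^{\eta}r_i^2\frac{\lambda_{i,1}^2}{\lambda_{i,2}^4}$ produced by that choice is strictly less than $\frac{1}{\lambda_0^2}$, which is precisely \eqref{sum-01-n}. Since (ii) of Proposition~\ref{charoftwoatomic} is equivalent to the desired conclusion, this suffices, and the three clauses become three parallel two-line verifications once the right $r_i$ is on the table.

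For (i) I would take $r_i$ to be the constant $c:=\big(\sum_{j=1}^{\eta}\frac{\lambda_{j,1}^2}{\lambda_{j,2}^2}\big)^{-1}$ for all $i\in J_{\eta}$. The first hypothesis $\sum_{j}\frac{\lambda_{j,1}^2}{\lambda_{j,2}^2}<1$ forces $c>1$, so $\{r_i\}_{i=1}^{\eta}\subseteq(1,\infty)$, and \eqref{sum-0-n} holds by construction. Then $\sum_{i}r_i^2\frac{\lambda_{i,1}^2}{\lambda_{i,2}^4}=c^2\sum_{i}\frac{\lambda_{i,1}^2}{\lambda_{i,2}^4}$, and the second hypothesis $\lambda_0^2\sum_i\frac{\lambda_{i,1}^2}{\lambda_{i,2}^4}<\big(\sum_i\frac{\lambda_{i,1}^2}{\lambda_{i,2}^2}\big)^2=c^{-2}$ rearranges to exactly $c^2\sum_i\frac{\lambda_{i,1}^2}{\lambda_{i,2}^4}<\frac{1}{\lambda_0^2}$, giving \eqref{sum-01-n}.

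For (ii) I would take $r_i:=\frac{\lambda_{i,2}^2}{\eta\,\lambda_{i,1}^2}$, which is $>1$ by the hypothesis $\eta\lambda_{i,1}^2<\lambda_{i,2}^2$; a one-line computation gives $\sum_i r_i\frac{\lambda_{i,1}^2}{\lambda_{i,2}^2}=\sum_i\frac1\eta=1$, so \eqref{sum-0-n} holds, while $\sum_i r_i^2\frac{\lambda_{i,1}^2}{\lambda_{i,2}^4}=\frac{1}{\eta^2}\sum_i\frac{1}{\lambda_{i,1}^2}$, which is $<\frac{1}{\lambda_0^2}$ precisely under the second hypothesis $\lambda_0^2\sum_i\frac{1}{\lambda_{i,1}^2}<\eta^2$. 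For (iii) the analogous choice is $r_i:=\big(\lambda_{i,1}^2\sum_{j=1}^{\eta}\frac{1}{\lambda_{j,2}^2}\big)^{-1}$, again $>1$ by the first hypothesis, with $\sum_i r_i\frac{\lambda_{i,1}^2}{\lambda_{i,2}^2}=\big(\sum_j\frac{1}{\lambda_{j,2}^2}\big)^{-1}\sum_i\frac{1}{\lambda_{i,2}^2}=1$ and $\sum_i r_i^2\frac{\lambda_{i,1}^2}{\lambda_{i,2}^4}=\big(\sum_j\frac{1}{\lambda_{j,2}^2}\big)^{-2}\sum_i\frac{1}{\lambda_{i,1}^2\lambda_{i,2}^4}$, so that $<\frac{1}{\lambda_0^2}$ is exactly the second hypothesis.

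I do not anticipate a serious obstacle: all the work lies in guessing the correct $r_i$ in each clause so that the normalization \eqref{sum-0-n} collapses to a transparent identity, and the only place the first hypothesis of each clause is used is to guarantee the \emph{strict} inequality $r_i>1$, so that $\{r_i\}_{i=1}^{\eta}\subseteq(1,\infty)$ and Proposition~\ref{charoftwoatomic} (rather than the weaker Theorem~\ref{charoftwoatomic-solution} with $r_i\in[1,\infty)$) is the applicable statement. I would write the three cases side by side, each as a short verification of \eqref{sum-0-n} and \eqref{sum-01-n}, and then invoke Proposition~\ref{charoftwoatomic} once at the end.
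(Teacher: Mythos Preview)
Your proposal is correct and follows essentially the same approach as the paper: the paper's proof simply applies Remark~\ref{tas-1} with the very same three choices of $r_i$ you wrote down, namely $r_i=\big(\sum_j \lambda_{j,1}^2/\lambda_{j,2}^2\big)^{-1}$, $r_i=\frac{1}{\eta}\lambda_{i,2}^2/\lambda_{i,1}^2$, and $r_i=\big(\lambda_{i,1}^2\sum_j 1/\lambda_{j,2}^2\big)^{-1}$ for cases (i), (ii), (iii). Your write-up spells out the verifications of \eqref{sum-0-n} and \eqref{sum-01-n} more explicitly than the paper does, but the argument is identical.
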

   \begin{proof}
Apply Remark~ \ref{tas-1} to
   \begin{align*}
r_i = \frac{1}{\sum_{j=1}^{\eta}
\frac{\lambda_{j,1}^2} {\lambda_{j,2}^2}}, \quad r_i=
\frac{1}{\eta} \,
\frac{\lambda_{i,2}^2}{\lambda_{i,1}^2} \quad \text{
and } \quad r_i = \frac{1}{\lambda_{i,1}^2
\sum_{j=1}^{\eta} \frac{1}{\lambda_{j,2}^2}}
   \end{align*}
in the cases (i), (ii) and (iii), respectively.
   \end{proof}
The above discussion can be applied to solve the
$1$-generation subnormal completion problem on
$\tcal_{\eta,1}$ with $2$-atomic measures.
   \begin{cor}
Suppose $\eta \in \nbb_2$, $\kappa = 1$, $p=1$ and
$\lambdab = \{\lambda_0\} \cup
\{\lambda_{i,1}\}_{i=1}^{\eta} \subseteq (0,\infty)$
are given. Then the following statements are
equivalent{\em :}
   \begin{enumerate}
   \item[(i)] $\lambdab$ has a subnormal completion $\slamh$ on
$\tcal_{\eta, 1}$ such that each measure $\mu_{i,
1}^{\lambdabh}$ is $2$-atomic,
   \item[(ii)] $\lambda_0^2 < \sum_{i=1}^{\eta}
\lambda_{i,1}^2$.
   \end{enumerate}
   \end{cor}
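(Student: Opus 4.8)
The plan is to reduce this $1$-generation statement to the already-established $2$-generation results, using the crucial observation that here the weights $\{\lambda_{i,2}\}_{i=1}^{\eta}$ are \emph{not} prescribed by the initial data $\lambdab$ and hence may be chosen at will. Concretely, any subnormal completion $\slamh$ of $\lambdab$ on $\tcal_{\eta,1}$ automatically carries weights $\hat\lambda_{i,2}$, and conversely, once we fix a choice of $\lambda_{i,2}$ the resulting $2$-generation data $\lambdab':=\{\lambda_0\}\cup\{\lambda_{i,1}\}_{i=1}^{\eta}\cup\{\lambda_{i,2}\}_{i=1}^{\eta}$ sits inside the family of completions of $\lambdab$.

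For (i)$\Rightarrow$(ii) I would take a subnormal completion $\slamh$ of $\lambdab$ on $\tcal_{\eta,1}$ with each $\mu_{i,1}^{\lambdabh}$ $2$-atomic, set $\lambda_{i,2}:=\hat\lambda_{i,2}$, and observe that $\slamh$ is then a subnormal completion of $\lambdab'$ on $\tcal_{\eta,1}$ with the same $2$-atomic measures $\mu_{i,1}^{\lambdabh}$. The first inequality in \eqref{kiki-1-infty}, furnished by the ``moreover'' part of Theorem~\ref{charoftwoatomic-infty}, then gives exactly $\lambda_0^2<\sum_{i=1}^{\eta}\lambda_{i,1}^2$, which is (ii).

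For (ii)$\Rightarrow$(i), assuming $\lambda_0^2<\sum_{i=1}^{\eta}\lambda_{i,1}^2$, I would exploit the freedom in choosing the $\lambda_{i,2}$: since $\eta\in\nbb_2$ is finite, pick positive reals $\lambda_{i,2}$ with $\lambda_{i,2}^2>\sum_{j=1}^{\eta}\lambda_{j,1}^2$ for all $i\in J_{\eta}$, so that $\lambdab'$ satisfies hypothesis \eqref{dodo-1} of Corollary~\ref{charoftwoatomic-2}. That corollary then yields a subnormal completion $\slamh$ of $\lambdab'$ on $\tcal_{\eta,1}$ with each $\mu_{i,1}^{\lambdabh}$ $2$-atomic, and since $\lambdab\subseteq\lambdab'\subseteq\lambdabh$ this same $\slamh$ also completes $\lambdab$, giving (i). I do not anticipate any real obstacle here: the only point that needs checking is that the free parameters $\lambda_{i,2}$ can be pushed large enough to meet \eqref{dodo-1}, which works precisely because $\eta$ is finite; everything else is a direct appeal to Theorem~\ref{charoftwoatomic-infty} and Corollary~\ref{charoftwoatomic-2}.
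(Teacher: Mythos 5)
Your proposal is correct and follows essentially the same route as the paper's own proof: for (i)$\Rightarrow$(ii) you absorb the completion's weights $\hat\lambda_{i,2}$ into the data and invoke the first inequality in \eqref{kiki-1-infty}, and for (ii)$\Rightarrow$(i) you choose the free weights $\lambda_{i,2}$ large enough to satisfy \eqref{dodo-1} and apply Corollary~\ref{charoftwoatomic-2}. No gaps.
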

   \begin{proof}
(i)$\Rightarrow$(ii) Let $\slamh$ be a subnormal completion
of $\lambdab$ on $\tcal_{\eta, 1}$ such that each measure
$\mu_{i, 1}^{\lambdabh}$ is $2$-atomic. Then clearly this
$\slamh$ is a subnormal completion of $\lambdab \cup
\{\hat\lambda_{i,2}\}_{i\in J_{\eta}}$ on $\tcal_{\eta,
1}$. Hence, by the condition \eqref{kiki-1-infty}, (ii) is
valid.

(ii)$\Rightarrow$(i) Choose any sequence
$\{\lambda_{i,2}\}_{i=1}^{\eta} \subseteq (0,\infty)$
that satisfies \eqref{dodo-1}. By Corollary~
\ref{charoftwoatomic-2}, $\lambdab \cup
\{\lambda_{i,2}\}_{i\in J_{\eta}}$ has a subnormal
completion $\slamh$ on $\tcal_{\eta, 1}$ such that
each measure $\mu_{i, 1}^{\lambdabh}$ is $2$-atomic.
As a consequence, this $\slamh$ is a subnormal
completion of $\lambdab$ on $\tcal_{\eta,1}$ as well.
This completes the proof.
   \end{proof}
In concluding this section, we consider the
$2$-generation subnormal completion problem on
$\tcal_{\eta,1}$ from the point of view of the
condition \eqref{dodo-1}.
   \begin{rem}
Suppose that $\eta\in \nbb_2$. Let us consider the
situation in which we are given initial data $\lambdab =
\{\lambda_0\} \cup \{\lambda_{i,1}\}_{i=1}^{\eta} \cup
\{\lambda_{i,2}\}_{i=1}^{\eta} \subseteq (0,\infty)$ on
$\tcal_{\eta, 1, 2}$ satisfying the equations
   \begin{align*}
\lambda_{i,2} = \lambda_{1,2}, \quad i \in J_\eta.
   \end{align*}
Notice that under the above assumption, if $\lambdab$
admits a subnormal completion on $\tcal_{\eta,1}$,
then it admits a $2$-generation flat subnormal
completion on $\tcal_{\eta,1}$ (see
\cite[Theorem~8.3]{EJSY1}), and any $2$-generation
flat subnormal completion $\slamh$ of $\lambdab$ on
$\tcal_{\eta,1}$ has the property that
$\mu_{i,1}^{\lambdabh}=\mu_{1,1}^{\lambdabh}$ for all
$i\in J_{\eta}$ (apply \eqref{IlB} to $u=e_{i,1}$).
Below, we discuss a few cases related to the condition
\eqref{dodo-1}.

\vspace{1ex}

$1^{\circ}$ If $\lambda_{1,2}^2 < \sum_{i=1}^\eta
\lambda_{i,1}^2$, then there is no completion of the
sequence
   \begin{align*}
\alpha = \left(\lambda_0, \sqrt{\sum_{i=1}^{\eta}
\lambda_{i,1}^2}, \lambda_{1,2} \right)
   \end{align*}
to a subnormal unilateral weighted shift, because it
is well known that a subnormal unilateral weighted
shift is hyponormal and so its weights must be
monotonically increasing. Therefore, by \cite[Theorem~
8.3(iii)]{EJSY1} there is no subnormal completion of
$\lambdab$ on $\tcal_{\eta, 1}$.

\vspace{1ex}

$2^{\circ}$ If $\lambda_{1,2}^2 = \sum_{i=1}^\eta
\lambda_{i,1}^2$, then the ``moreover'' part of
Theorem~\ref{charoftwoatomic-infty} shows that there
is no subnormal completion $\slamh$ of $\lambdab$ on
$\tcal_{\eta,1}$ with $2$-atomic measures
$\mu_{i,1}^{\lambdabh}$ since we lack the third
inequality in \eqref{kiki-1-infty}. Alternatively, we
may use the ``moreover'' part of \cite[Proposition~
7.6]{EJSY1}.

\vspace{1ex}

$3^{\circ}$ If $\lambda_{1,2}^2 > \sum_{i=1}^\eta
\lambda_{i,1}^2$ and $\lambda_0^2 < \sum_{i=1}^\eta
\lambda_{i,1}^2$ we may apply Corollary~
\ref{charoftwoatomic-2} to deduce that there is a
subnormal completion $\slamh$ of $\lambdab$ on
$\tcal_{\eta,1}$ with $2$-atomic measures
$\mu_{i,1}^{\lambdabh}$.

\vspace{1ex}

$4^{\circ}$ If $\lambda_{1,2}^2 > \sum_{i=1}^\eta
\lambda_{i,1}^2$ and $\lambda_0^2 = \sum_{i=1}^\eta
\lambda_{i,1}^2$ we may apply the condition
\eqref{kiki-1-infty} of
Theorem~\ref{charoftwoatomic-infty} to deduce that
there is no subnormal completion $\slamh$ of
$\lambdab$ on $\tcal_{\eta,1}$ with the
$\mu_{i,1}^{\lambdabh}$ $2$-atomic. Alternatively,
again using the ``moreover'' part of
\cite[Proposition~ 7.6]{EJSY1} we may deduce that the
measures $\mu_{i,1}^{\lambdabh}$ for any subnormal
completion $\slamh$ of $\lambdab$ on $\tcal_{\eta,1}$
(provided it exists) must be $1$-atomic.
   \end{rem}
   \section{\label{Sect5}An explicit solution of the \mbox{$2$}-generation SCP
on \mbox{$\tcal_{2,1}$}}
   In this section we give necessary and sufficient
conditions for solving the $2$-generation subnormal
completion problem on $\tcal_{2,1}$ with $2$-atomic
measures explicitly in terms of the initial data. In
view of Proposition~\ref{charoftwoatomic}(iii), the
solution of the $2$-generation subnormal completion
problem on $\tcal_{\eta,1}$ with $2$-atomic measures
reduces to computing the infimum $\beta(\eta)$ of the
quadratic form $\sum_{i=1}^\eta r_i^2
\frac{\lambda_{i,1}^2} {\lambda_{i,2}^4}$ subject to
the constraints that $\{r_i\}_{i=1}^{\eta} \subseteq
(1,\infty)$ and $\sum_{i=1}^\eta r_i
\frac{\lambda_{i,1}^2} {\lambda_{i,2}^2} = 1$. This
task is extremely complicated. The main difficulty
comes from the requirement that the numbers $r_i$
should belong to the open interval $(1,\infty)$. Even
in the case of $\eta=2$, there are three different
formulas for the infimum depending on weights in
question (see Theorem~\ref{iff-eta2} below).
However under some additional restrictive assumptions, the
infimum of the above quadratic form can be computed
explicitly.
   \begin{pro}
Let $\eta \in {\mathbb N}_{2}$, $\{a_{i}\}_{i=1}^{\eta}
\subseteq (0,\infty)$ and $\{b_{i}\}_{i=1}^{\eta} \subseteq
(0,\infty)$. Then
   \begin{equation} \label{min-1-a}
\min\bigg\{\sum_{i=1}^\eta b_{i} r_i^2 \colon
\{r_i\}_{i=1}^{\eta} \subseteq (0,\infty) \text{ and }
\sum_{i=1}^\eta a_{i} r_i =1\bigg\} =
\frac{1}{\sum_{i=1}^\eta \frac{a_{i}^2}{b_i}}.
   \end{equation}
Moreover, if $\sum_{j=1}^\eta \frac{a_{j}^2}{b_j} <
\frac{a_i}{b_i}$ for every $i\in J_{\eta}$, then
   \begin{equation} \label{min-2-a}
\min\bigg\{\sum_{i=1}^\eta b_{i} r_i^2 \colon
\{r_i\}_{i=1}^{\eta} \subseteq (1,\infty) \text{ and }
\sum_{i=1}^\eta a_{i} r_i =1\bigg\} =
\frac{1}{\sum_{i=1}^\eta \frac{a_{i}^2}{b_i}}.
   \end{equation}
In both cases the minimum is attained for
   \begin{align}  \label{min-2a}
r_i= \frac{1}{\frac{b_i}{a_i} \sum_{j=1}^\eta
\frac{a_{j}^2}{b_j}}, \quad i\in J_{\eta}.
   \end{align}
   \end{pro}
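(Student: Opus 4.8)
The plan is to treat this as a constrained optimization (Lagrange multiplier) problem, first on the open positive orthant and then, under the extra hypothesis, to check that the unconstrained-type minimizer actually lands in $(1,\infty)^\eta$.

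First I would prove \eqref{min-1-a}. The function $\Phi(r_1,\dots,r_\eta) = \sum_{i=1}^\eta b_i r_i^2$ restricted to the affine hyperplane $H = \{\sum_i a_i r_i = 1\}$ is a strictly convex quadratic form, coercive on $H$ (since all $b_i > 0$), so it attains a unique global minimum on $H$; that minimizer is automatically a critical point of the Lagrangian $\sum_i b_i r_i^2 - \mu(\sum_i a_i r_i - 1)$. Setting the gradient to zero gives $2 b_i r_i = \mu a_i$, i.e. $r_i = \frac{\mu a_i}{2 b_i}$. Substituting into the constraint $\sum_i a_i r_i = 1$ yields $\frac{\mu}{2}\sum_i \frac{a_i^2}{b_i} = 1$, so $\frac{\mu}{2} = \big(\sum_i \frac{a_i^2}{b_i}\big)^{-1}$, and hence $r_i = \big(\frac{b_i}{a_i}\sum_j \frac{a_j^2}{b_j}\big)^{-1}$, which is exactly \eqref{min-2a}; note each such $r_i$ is positive, so the minimizer lies in $(0,\infty)^\eta$. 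Plugging back, $\Phi = \sum_i b_i r_i^2 = \sum_i b_i \cdot \frac{a_i^2}{b_i^2}\big(\sum_j \frac{a_j^2}{b_j}\big)^{-2} = \big(\sum_i \frac{a_i^2}{b_i}\big)\big(\sum_j \frac{a_j^2}{b_j}\big)^{-2} = \big(\sum_i \frac{a_i^2}{b_i}\big)^{-1}$, which is \eqref{min-1-a}. (Since the infimum over $(0,\infty)^\eta \cap H$ equals the infimum over its closure and is attained in the open orthant, writing ``$\min$'' is justified.)

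For \eqref{min-2-a}, observe that the feasible set for the second problem, $(1,\infty)^\eta \cap H$, is contained in the feasible set $(0,\infty)^\eta \cap H$ for the first, so the second minimum is $\Ge \frac{1}{\sum_i a_i^2/b_i}$ automatically. Thus it suffices to show the minimizer \eqref{min-2a} of the first problem actually satisfies $r_i > 1$ for every $i$, for then it is feasible for the second problem and attains the same value. But $r_i > 1$ is equivalent to $\frac{b_i}{a_i}\sum_j \frac{a_j^2}{b_j} < 1$, i.e. to $\sum_j \frac{a_j^2}{b_j} < \frac{a_i}{b_i}$, which is precisely the hypothesis. Hence \eqref{min-2-a} holds and the minimum is attained at \eqref{min-2a} in both cases.

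The only real subtlety — and the step I would be most careful with — is the existence/attainment argument: on the open orthant one cannot invoke compactness directly, so I would either argue via strict convexity and coercivity of $\Phi|_H$ (a minimizing sequence stays bounded and its limit has all coordinates $>0$ because the explicit critical point does, and uniqueness of the critical point of a strictly convex function forces the limit to be it), or simply exhibit the candidate \eqref{min-2a} and verify minimality by the elementary inequality $\sum_i b_i r_i^2 \cdot \sum_i \frac{a_i^2}{b_i} \Ge \big(\sum_i a_i r_i\big)^2 = 1$ (Cauchy--Schwarz applied to the vectors $(\sqrt{b_i}\,r_i)_i$ and $(a_i/\sqrt{b_i})_i$), with equality exactly when $\sqrt{b_i}\,r_i \propto a_i/\sqrt{b_i}$, i.e. when $r_i \propto a_i/b_i$ — which, after normalizing by the constraint, gives \eqref{min-2a}. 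This Cauchy--Schwarz route is cleaner and avoids any soft-analysis worry, so I would present it as the main line of proof, and then just note that under the displayed hypothesis the equality case lies in $(1,\infty)^\eta$.
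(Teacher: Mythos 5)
Your proposal is correct and its main line of argument — Cauchy--Schwarz applied to $(\sqrt{b_i}\,r_i)_i$ and $(a_i/\sqrt{b_i})_i$, followed by substituting the explicit minimizer \eqref{min-2a} and checking that under the extra hypothesis each $r_i>1$ — is essentially identical to the paper's proof. The preliminary Lagrange-multiplier derivation is a fine way to discover the candidate but is not needed once you verify minimality via Cauchy--Schwarz, exactly as the paper does.
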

   \begin{proof}
Suppose $\{r_i\}_{i=1}^{\eta} \subseteq (0,\infty)$ and
$\sum_{i=1}^\eta a_{i} r_i =1$. It follows from the
Cauchy-Schwarz inequality that
   \begin{align*}
1 = \sum_{i=1}^\eta a_{i} r_i = \sum_{i=1}^\eta
\frac{a_{i}}{\sqrt{b_i}} \cdot \sqrt{b_i} \, r_i \Le
\bigg(\sum_{i=1}^\eta \frac{a_{i}^2}{b_i}\bigg)^{1/2}
\bigg(\sum_{i=1}^\eta b_{i} r_i^2\bigg)^{1/2}.
   \end{align*}
Therefore, we have
   \begin{align} \label{sum-2-a}
\sum_{i=1}^\eta b_{i} r_i^2 \Ge \frac{1}{\sum_{i=1}^\eta
\frac{a_{i}^2}{b_i}}.
   \end{align}
Substituting $\{r_i\}_{i=1}^{\eta}$ as in \eqref{min-2a}
shows that the inequality in \eqref{sum-2-a} becomes
equality. This proves \eqref{min-1-a}. The equation
\eqref{min-2-a} is a direct consequence of \eqref{min-1-a}.
   \end{proof}
As shown below, a solution of the $2$-generation
subnormal completion problem on $\tcal_{2,1}$ with
$2$-atomic measures can be written entirely in terms of
the initial data. This is done by computing the infimum
$\beta(2)$.
   \begin{thm} \label{iff-eta2}
Suppose $\eta =2$, $\kappa = 1$, $p=2$ and $\lambdab =
\{\lambda_0\} \cup \{\lambda_{i,1}\}_{i=1}^{2} \cup
\{\lambda_{i,2}\}_{i=1}^{2} \subseteq (0,\infty)$ are
given. Then $\lambdab$ has a subnormal completion $\slamh$
on $\tcal_{2, 1}$ with $2$-atomic measures $\mu_{1,
1}^{\lambdabh}$ and $\mu_{2, 1}^{\lambdabh}$ if and only if
$\beta(2) < \frac{1}{\lambda_0^2}$ and $1< \tau$, where
   \begin{align*}
\beta(2) =
   \begin{cases}
\frac{(a_1-a_2)^2+a_1b_1}{a_{2}^{2}b_{1}} & \text{if }
\sigma \Le 1,
   \\[2ex]
\frac{1}{a_1+b_1} & \text{if } 1 < \sigma < \tau,
   \\[2ex]
\frac{(b_2-b_1)^2+a_1b_1}{a_{1}b_{2}^2} & \text{if } \tau
\Le \sigma,
   \end{cases}
   \end{align*}
with $\sigma=\frac{a_2}{a_1+b_1}$, $\tau=
\frac{a_2(b_{2}-b_{1})}{a_{1}b_2}$, $a_j=\lambda_{1,j}^2$
and $b_j=\lambda_{2,j}^2$ for $j=1,2$.
   \end{thm}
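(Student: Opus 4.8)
The plan is to evaluate the infimum $\beta(2)$ of \eqref{bet-et} explicitly and then quote Proposition~\ref{charoftwoatomic}(iii), according to which (i) holds if and only if $-\infty<\beta(2)<1/\lambda_0^2$. Write $A_1=a_1/a_2$, $A_2=b_1/b_2$, $B_1=a_1/a_2^2$, $B_2=b_1/b_2^2$, so that $\beta(2)$ is the infimum of $B_1r_1^2+B_2r_2^2$ over
\[
\varDelta=\{(r_1,r_2)\in(1,\infty)^2\colon A_1r_1+A_2r_2=1\}.
\]
Since $A_1r_1+A_2r_2>A_1+A_2$ for $(r_1,r_2)\in(1,\infty)^2$, while letting $r_1\to1^{+}$ produces points of $\varDelta$ when $A_1+A_2<1$, the set $\varDelta$ is nonempty precisely when $A_1+A_2<1$. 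An elementary manipulation gives the equivalence $A_1+A_2<1\iff 1<\tau$ together with the identity $(1-A_2)/A_1=\tau$. Thus, by the footnote to Proposition~\ref{charoftwoatomic}(iii), the hypothesis $1<\tau$ in the statement is exactly ``$-\infty<\beta(2)$'', and it suffices to show that, whenever $1<\tau$, the true value of $\beta(2)$ equals the three-branch expression in the theorem; then ``$1<\tau$ and $\beta(2)<1/\lambda_0^2$'' is precisely condition (iii) of Proposition~\ref{charoftwoatomic}.

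Assume $1<\tau$. On the constraint line substitute $r_2=(1-A_1r_1)/A_2$; then $(r_1,r_2)\in(1,\infty)^2$ amounts to $r_1\in(1,\tau)$ (using $(1-A_2)/A_1=\tau$), so $\beta(2)=\inf_{r_1\in(1,\tau)}\tilde Q(r_1)$ with
\[
\tilde Q(r_1)=B_1r_1^2+B_2\left(\frac{1-A_1r_1}{A_2}\right)^{2}.
\]
This $\tilde Q$ is a strictly convex quadratic; a short computation locates its vertex at $r_1=\sigma=a_2/(a_1+b_1)$ and gives the minimum value $\tilde Q(\sigma)=1/(A_1^2/B_1+A_2^2/B_2)=1/(a_1+b_1)$ (the last equality is \eqref{min-1-a} with $A_i$, $B_i$ in the roles of $a_i$, $b_i$). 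Hence $\beta(2)$ is governed by the position of $\sigma$ relative to $(1,\tau)$, and because $A_1+A_2<1$ the alternatives $\sigma\Le1$ and $\tau\Le\sigma$ cannot both occur. If $1<\sigma<\tau$, the vertex lies in $(1,\tau)$, the infimum is attained, and $\beta(2)=1/(a_1+b_1)$. If $\sigma\Le1$, then $\tilde Q$ is strictly increasing on $(1,\tau)$, so $\beta(2)=\tilde Q(1)=B_1+B_2(1-A_1)^2/A_2^2$, which simplifies to $((a_1-a_2)^2+a_1b_1)/(a_2^2b_1)$. If $\tau\Le\sigma$, then $\tilde Q$ is strictly decreasing on $(1,\tau)$, so $\beta(2)=\tilde Q(\tau)=B_1\tau^2+B_2$, which simplifies to $((b_2-b_1)^2+a_1b_1)/(a_1b_2^2)$. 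Inserting these values into Proposition~\ref{charoftwoatomic}(iii) gives the theorem.

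I expect no genuine obstacle: after the substitution the problem is a one-variable strictly convex minimization over an open interval, and all that remains is algebra. The points needing care are the bookkeeping identities $A_1+A_2<1\iff\tau>1$ and $(1-A_2)/A_1=\tau$ (which tie the feasibility and shape of the problem to the parameter $\tau$), the determination that the vertex of $\tilde Q$ is at $\sigma$, and the two routine simplifications of $\tilde Q(1)$ and $\tilde Q(\tau)$. One should also note that the three branch formulas agree on the overlaps $\sigma=1$ and $\sigma=\tau$, where each equals $1/(a_1+b_1)$, so the case distinction in the statement is consistent.
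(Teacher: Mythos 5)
Your proposal is correct and follows essentially the same route as the paper's proof: substitute the constraint to reduce to a one-variable strictly convex quadratic on the open interval $(1,\tau)$, locate the vertex at $\sigma$ with value $1/(a_1+b_1)$, and split into the three cases according to the position of $\sigma$ relative to $(1,\tau)$; your bookkeeping identities ($A_1+A_2<1\iff\tau>1$, $(1-A_2)/A_1=\tau$) and the evaluations of $\tilde Q(1)$ and $\tilde Q(\tau)$ all check out.
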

   \begin{proof}
In view of Proposition~\ref{charoftwoatomic}, it is enough
to compute $\beta(2)$. If we replace $r_1$ and $r_2$ by $x$
and $y$, respectively, then \eqref{bet-et} with $\eta=2$
takes the form
   \begin{align*}
\beta(2)=\inf \Big\{\Theta(x,y) \colon x, y \in (1,\infty)
\text{ and } \phi (x,y) = 1\Big\},
   \end{align*}
where
   \begin{align*}
\Theta(x,y) = x^{2} \frac{a_{1}}{a_{2}^{2}} + y^{2}
\frac{b_{1}}{b_{2}^{2}} \quad \text{and} \quad \phi (x,y) =
x\frac{a_{1}}{a_{2}}+y\frac{b_{1}}{b_{2}} \quad \text{for}
\quad x,y\in (1,\infty).
   \end{align*}
If $x,y\in (1,\infty)$ are such that $\phi(x,y) = 1$, then
   \begin{align} \label{abcda}
y = \frac{b_2}{b_1} \Big(1-\frac{a_{1}}{a_{2}}x\Big)
>1,
   \end{align}
so $1< x < \tau$. As a consequence, we see that
$-\infty<\beta(2)$ if and only if $1 < \tau$. Substituting
$y$ as in \eqref{abcda} into $\Theta (x,y)$, we obtain
   \begin{align*}
\Theta (x,y)& = Ax^{2}-2Bx+C, \quad 1< x < \tau,
   \end{align*}
where
   \begin{align*}
\text{$A=\frac{a_1}{a_2^2}\left(1 + \frac{a_{1}}{b_{1}}
\right)$, $B=\frac{a_{1}}{a_{2}b_{1}}$ and
$C=\frac{1}{b_{1}}$.}
   \end{align*}
Note that $A,B,C>0$. The quadratic polynomial $\Omega
(x):=Ax^{2}-2Bx+C$ regarded as a function on $\rbb$ has a
minimum at $x=\frac{B}{A}$. Observe that
   \begin{align*}
\frac{B}{A}=\sigma \quad \text{and} \quad \Omega(\sigma)
=\frac{1}{a_1+b_1}>0.
   \end{align*}
It is now a routine matter to compute $\beta(2)$ by
considering three possible disjoint cases $\sigma \Le 1$,
$1 < \sigma < \tau$ and $\tau \Le \sigma$. What we get is
$\beta(2)=\varOmega(1)$ in the first case,
$\beta(2)=\varOmega(\sigma)$ in the second and
$\beta(2)=\varOmega(\tau)$ in the third one, where
   \begin{align*}
\varOmega(1) = \frac{(a_1-a_2)^2+a_1b_1}{a_{2}^{2}b_{1}}
\quad \text{and} \quad \varOmega(\tau) =
\frac{(b_2-b_1)^2+a_1b_1}{a_{1}b_{2}^2}.
   \end{align*}
This completes the proof.
   \end{proof}

   \vspace{1ex}

\textbf{Acknowledgement}. The authors take this opportunity
to express their appreciation both for the support of their
universities Bucknell University, Jagiellonian University
and Kyungpook National University materially aiding this
collaboration, and to the Departments of Mathematics at
which they have been guests in 2018 and 2019 for warm
hospitality.
   
   \end{document}